\newtheorem{theorem}{Theorem}
\newtheorem{prop}{Proposition}
\newtheorem{lemma}{Lemma}
\newtheorem{exmp}{Example}
\begin{document}
\title{Markov chains of $Z$-oriented triangulations of surfaces}
\author{Adam Tyc}
\keywords{Markov chain, ergodicity, Eulerian triangulation, graph coloring, graphs embedded in surfaces, zigzag, $z$-orientation}
\subjclass[2020]{05C10, 
	60J10}

\
\address{Adam Tyc: Faculty of Mathematics and Computer Science, 
University of Warmia and Mazury, S{\l}oneczna 54, 10-710 Olsztyn, Poland}
\email{adam.tyc@matman.uwm.edu.pl}

\maketitle
\begin{abstract}
	We consider triangulations of closed $2$-dimensional (not necessarily orientable) surfaces. 
	Any minimal set of zigzags that double covers the set of edges provides a $z$-orientation of the triangulation. 
	We introduce Markov chains of $z$-oriented triangulations. 
	Our main result is a characterization of their ergodicity. 
	This topic is closely connected to coloring of Eulerian triangulations. 
\end{abstract}

\section{Introduction}
A {\it zigzag} (or a {\it closed left-right path} \cite{GR-book,Shank}) is a cyclic sequence of edges in a $2$-cell embedding of a connected simple finite graph in a closed $2$-dimensional surface such that each pair of consecutive edges is contained in a face, has a common vertex and no three consecutive edges of this sequence belong to the same face. 
This concept generalizes classical {\it Petrie polygons} in regular polytopes \cite{Coxeter}. 
Zigzags are closely related to the so-called {\it Gauss codes}, i.e. finite sequences of symbols where each symbol occurs precisely twice. 
Some Gauss codes can be obtained from a projection of a closed curve onto a surface, where consecutive symbols represent self-intersection points ordered along the curve.
Embeddings with a single zigzag are geometric realizations of such codes \cite{CrRos,GR-book,Lins2}. 
There are applications of zigzags in computer graphics \cite{TriApp}. 

Zigzags in $3$-regular plane graphs are related to mathematical chemistry. 
They were used to enumerate all combinatorial possibilities for fullerenes \cite{BD, DDS-book}. 
Triangulations of the sphere are dual to such objects and have the same zigzags. 
Interesting examples of spherical triangulations are {\it tetrahedral chains}, i.e. sequences of tetrahedra glued face to face in consecutive pairs. 
In \cite{T3} Markov chains are exploited to approximate the probability that randomly constructed tetrahedral chain contains precisely $k\in\{1,2,3\}$ zigzags (up to reversing). 
More results on zigzags in triangulations of arbitrary surfaces (not necessarily orientable) can be found in \cite{PT1,PT3,PT2,T2,T1}. 
Zigzags are also investigated in arbitrary embedded graphs \cite{T4} and in thin chamber complexes \cite{DezaPankov}. 

A {\it $z$-orientation} of a triangulation is a minimal set of zigzags which double covers all edges \cite{DDS-book,T1}. 
There are two possibilities for an edge in a triangulation with a fixed $z$-orientation: 
zigzags pass through this edge in different directions or twice in the same direction. 
The edge is {\it of type I} or {\it of type II} in these cases, respectively. 
Also, for faces there are two possibilities: a face is {\it of type I} if two of its edges are of type I and the other is of type II or {\it of type II} if all its edges are of type II. 
It is easy to see that every vertex of a $z$-oriented triangulation is incident to an even number of edges of type II and, consequently, the subgraph formed by all edges of type II is Eulerian. 
In particular, $z$-oriented triangulations with all faces of type II are Eulerian. 
Eulerian triangulations of surfaces are interesting, for instance, due to their properties related to graph coloring \cite{MoharKlein,MoharProj,TsaiWest}. 

In the present paper, we introduce a Markov chain for $z$-oriented triangulations of arbitrary surfaces. 
The transition graph of this chain is constructed from the triangulation graph where each edge of type II is considered as a directed edge and each edge of type I is replaced with two oppositely directed edges. 
If all faces of the triangulation are of type II, then the transition graph is identified with a {\it directed Eulerian triangulation}. 

Our main result is the following: 
the Markov chain is ergodic if and only if the $z$-oriented triangulation is not $3$-colorable or it contains a face of type I. 

Using results of Tsai and West \cite{TsaiWest}, Fisk \cite{Fisk} and Mohar \cite{MoharProj} we simplify the above characterization for the case of sphere and the real projective plane as follows: the Markov chain is ergodic if and only if its $z$-oriented triangulation contains a face of type I.

\section{Zigzags and $z$-oriented triangulations of surfaces}
Let $M$ be a connected closed $2$-dimensional surface (not necessarily orientable). 
An embedding of a graph $G$ in $M$ is {\it $2$-cell} if each connected component of $M\setminus G$ is homeomorphic to an open disk. 
The closures of these connected components are called {\it faces}. 
A $2$-cell embedding of $G$ is {\it closed} if each face is homeomorphic to a closed disk. 
The case of a digraph is the same.  

A closed $2$-cell embedding $\Gamma$ of a connected finite graph in $M$ is called a {\it multi-triangulation} of $M$ if all its faces are triangles. 
In the case when the graph of a multi-triangulation is simple, we call it a {\it triangulation}. 
For triangulations of surfaces the following hold:
\begin{enumerate}
	\item[$\bullet$] every edge is contained in precisely two distinct faces,
	\item[$\bullet$] the intersection of two distinct faces is either an edge, a vertex or empty. 
\end{enumerate} 
For multi-triangulations which are not triangulations, the second fails. 
We say that two edges are {\it adjacent} if they are distinct and there is a face containing them. 
Thus, adjacent edges have a common vertex. 
Two faces are {\it adjacent} if they are distinct and their intersection is an edge. 
 
A {\it zigzag} in a triangulation $\Gamma$ is a sequence of edges $Z=\{e_i\}_{i\in\mathbb{N}}$ satisfying the following conditions for every $i\in\mathbb{N}$:
\begin{enumerate}
	\item[$\bullet$] $e_i$ and $e_{i+1}$ are adjacent, 
	\item[$\bullet$] the faces containing $e_i, e_{i+1}$ and $e_{i+1}, e_{i+2}$ are adjacent and the edges $e_i, e_{i+2}$ are disjoint. 
\end{enumerate} 
Since $\Gamma$ is a finite graph, there is a natural number $n>0$ such that $e_{i+n}=e_i$ for every $i\in\mathbb{N}$. 
In other words, $Z$ is a cyclic sequence $\{e_1,\dots,e_n\}$, where $n$ is the smallest number satisfying this condition. 
Observe that $Z$ can be (equivalently) presented as a cyclic sequence of vertices $\{v_1,\dots,v_n\}$, where $e_i$ is formed by the vertices $v_i, v_{i+1}$ for $i=1,\dots,n-1$ and $e_n$ is formed by $v_n, v_1$. 

Every zigzag is completely determined by a pair of consecutive edges belonging to it, and conversely, any ordered pair of adjacent edges determines a unique zigzag that contains this pair. 
If $Z=\{e_1,\dots,e_n\}$ is a zigzag, then the sequence $Z^{-1}=\{e_n,\dots,e_1\}$ is also a zigzag and we call it {\it reversed} to $Z$. 
A zigzag cannot be self-reversed, since it does not contain a subsequence of the form $e,e',\dots,e',e$ (see \cite{PT2}). 
Let $\mathcal{Z}(\Gamma)$ be the set of all zigzags in $\Gamma$. 
It follows that $\mathcal{Z}(\Gamma)$ consists of an even number of zigzags. 

A {\it $z$-orientation} $\tau$ of $\Gamma$ is a set of zigzags such that for every $Z\in\mathcal{Z}(\Gamma)$ we have either $Z\in\tau$ or $Z^{-1}\in\tau$. 
If $\Gamma$ contains precisely $k$ zigzags up to reversing, i.e. $|\mathcal{Z}(\Gamma)|=2k$, then $|\tau|=k$ and there are precisely $2^k$ distinct $z$-orientations of $\Gamma$. 
If $\tau=\{Z_1,\dots,Z_k\}$ is a $z$-orientation, then we say that the $z$-orientation $\tau^{-1}=\{Z^{-1}_1,\dots,Z^{-1}_k\}$ is {\it reversed} to $\tau$. 
The triangulation $\Gamma$ with a fixed $z$-orientation $\tau$ is said to be a {\it $z$-oriented triangulation} and denoted by $(\Gamma,\tau)$ (see \cite{T2, T1}). 

Consider a $z$-oriented triangulation $(\Gamma,\tau)$. 
For each edge $e$ of $\Gamma$, one of the following possibilities is realized: 
\begin{enumerate}
	\item[$(1)$] there exists a zigzag $Z\in\tau$ such that $e$ occurs twice in $Z$ and $e$ does not occur in any other zigzag from $\tau$; 
	\item[$(2)$] there exist distinct zigzags $Z,Z'\in\tau$ such that $e$ occurs once in each of them and $e$ does not occur in any other zigzag from $\tau$. 
\end{enumerate}
In the case $(1)$, $Z$ passes through $e$ twice either in opposite directions or in the same direction. Then, we say that $e$ is of {\it type {\rm I}} or of {\it type {\rm II}}, respectively. 
Similarly, in the case $(2)$, $e$ is said to be of {\it type {\rm I}} if $Z$ and $Z'$ pass through it in opposite directions, and of {\it type {\rm II}} if $Z$ and $Z'$ pass through it in the same direction. 
Thus, edges of type II can be considered as directed edges. 
If we replace $\tau$ with $\tau^{-1}$, then the types of edges do not change, but the directions of edges of type II are reversed. 

By \cite[Proposition 1]{T1}, each face of $(\Gamma,\tau)$ satisfies precisely one of the following: 
\begin{enumerate}
	\item[(I)] two edges of the face are of type I and the third edge is of type II (see Fig. 1(I));
	\item[(II)] all edges of the face are of type II and form a directed cycle (see Fig. 1(II)). 
\end{enumerate}
\begin{center} 
	\begin{tikzpicture}[scale=0.63]
		
		\draw[fill=black] (0,2) circle (3pt);
		\draw[fill=black] (-1.7320508076,-1) circle (3pt);
		\draw[fill=black] (1.7320508076,-1) circle (3pt);
		
		\draw [thick, line width=1pt, decoration={markings,
			mark=at position 0.59 with {\arrow[scale=1.25,>=stealth]{>>}}},
			postaction={decorate}] (5.1961524228,2) -- (3.4641016152,-1);
		
		\draw [thick, line width=1pt, decoration={markings,
			mark=at position 0.59 with {\arrow[scale=1.25,>=stealth]{>>}}},
			postaction={decorate}] (3.4641016152,-1) -- (6.9282032304,-1);
		
		\draw [thick, line width=1pt, decoration={markings,
			mark=at position 0.59 with {\arrow[scale=1.25,>=stealth]{<<}}},
			postaction={decorate}] (5.1961524228,2) -- (6.9282032304,-1);
		
		\node at (0,-1.7) {(I)};
		
		\draw[fill=black] (5.1961524228,2) circle (3pt);
		\draw[fill=black] (3.4641016152,-1) circle (3pt);
		\draw[fill=black] (6.9282032304,-1) circle (3pt);
		
		\draw [thick, line width=1pt, decoration={markings,
			mark=at position 0.59 with {\arrow[scale=1.25,>=stealth]{><}}},
			postaction={decorate}] (0,2) -- (-1.7320508076,-1);
		
		\draw [thick, line width=1pt, decoration={markings,
			mark=at position 0.59 with {\arrow[scale=1.25,>=stealth]{>>}}},
			postaction={decorate}] (-1.7320508076,-1) -- (1.7320508076,-1);
		
		\draw [thick, line width=1pt, decoration={markings,
			mark=at position 0.59 with {\arrow[scale=1.25,>=stealth]{><}}},
			postaction={decorate}] (0,2) -- (1.7320508076,-1);
		
		\node at (5.1961524228,-1.7) {(II)};
	\end{tikzpicture}
	\captionof{figure}{ }
\end{center}
A face is of {\it type {\rm I}} or of {\it type {\rm II}} if the corresponding possibility is realized. 
Replacing the $z$-orientation $\tau$ with $\tau^{-1}$ does not change the types of faces. 

All digraphs that we will consider are finite and without loops, but they may contain multiple directed edges. 
An {\it Eulerian digraph} is a connected digraph such that every vertex has equal indegree and outdegree. 
A {\it directed Eulerian embedding} is a $2$-cell embedding of an Eulerian digraph in $M$ such that the boundary of each face is a directed closed walk. 
This condition is equivalent to the requirement that incoming and outgoing edges alternate in the rotation around each vertex. 
A directed Eulerian embedding is said to be a {\it directed Eulerian multi-triangulation} 
if its underlying graph is a multi-triangulation. 
If this graph is simple (it is a triangulation), then the embedding is called a {\it directed Eulerian triangulation}. 

Let $\Gamma_{II}$ be the directed graph formed by all edges of type II from $(\Gamma,\tau)$ (their directions are determined by $\tau$) and all their vertices. 
For every vertex of $\Gamma$, the number of times that the zigzags from $\tau$ enter this vertex is equal to the number of times that they leave it. 
Therefore, the indegree equals the outdegree for every vertex of $\Gamma_{II}$ and if $\Gamma_{II}$ is connected, then it is a simple Eulerian digraph. 
In particular, if all faces of $(\Gamma,\tau)$ are of type II, then $\Gamma_{II}$ is a directed Eulerian triangulation of $M$ whose underlying graph is $\Gamma$.  

\begin{exmp}\label{ex1}{\rm 
	Denote by $\mathcal{O}$ the graph of the octahedron shown in Fig. 2.
	\begin{center} 
		\begin{tikzpicture}[scale=0.42] 
			\begin{scope}[xshift=-10cm]
				\coordinate (A1) at (90:5cm);
				\coordinate (A2) at (210:5cm);
				\coordinate (A3) at (330:5cm);
				
				\draw[fill=black] (A1) circle (3.5pt);
				\draw[fill=black] (A2) circle (3.5pt);	
				\draw[fill=black] (A3) circle (3.5pt);
				
				\coordinate (B1) at (270:1.25cm);
				\coordinate (B2) at (30:1.25cm);
				\coordinate (B3) at (150:1.25cm);
				
				\draw[fill=black] (B1) circle (3.5pt);
				\draw[fill=black] (B2) circle (3.5pt);	
				\draw[fill=black] (B3) circle (3.5pt);
				
				\draw[thick, line width=1pt, decoration={markings,
					mark=at position 0.55 with {\arrow[scale=1,>=stealth]{><}}},
				postaction={decorate}] (A1) -- (A2);
				\draw[thick, line width=1pt, decoration={markings,
					mark=at position 0.55 with {\arrow[scale=1,>=stealth]{>>}}},
				postaction={decorate}] (A2) -- (A3);
				\draw[thick, line width=1pt, decoration={markings,
					mark=at position 0.55 with {\arrow[scale=1,>=stealth]{><}}},
				postaction={decorate}] (A3) -- (A1);
				
				\draw[thick, line width=1pt, decoration={markings,
					mark=at position 0.65 with {\arrow[scale=1,>=stealth]{><}}},
				postaction={decorate}] (B1) -- (B2);
				\draw[thick, line width=1pt, decoration={markings,
					mark=at position 0.67 with {\arrow[scale=1,>=stealth]{>>}}},
				postaction={decorate}] (B2) -- (B3);
				\draw[thick, line width=1pt, decoration={markings,
					mark=at position 0.65 with {\arrow[scale=1,>=stealth]{><}}},
				postaction={decorate}] (B3) -- (B1);
				
				\draw[thick, line width=1pt, decoration={markings,
					mark=at position 0.65 with {\arrow[scale=1,>=stealth]{><}}},
				postaction={decorate}] (A1) -- (B2);
				\draw[thick, line width=1pt, decoration={markings,
					mark=at position 0.65 with {\arrow[scale=1,>=stealth]{><}}},
				postaction={decorate}] (A1) -- (B3);
				
				\draw[thick, line width=1pt, decoration={markings,
					mark=at position 0.65 with {\arrow[scale=1,>=stealth]{><}}},
				postaction={decorate}] (A2) -- (B1);
				\draw[thick, line width=1pt, decoration={markings,
					mark=at position 0.65 with {\arrow[scale=1,>=stealth]{<<}}},
				postaction={decorate}] (A2) -- (B3);
				
				\draw[thick, line width=1pt, decoration={markings,
					mark=at position 0.65 with {\arrow[scale=1,>=stealth]{><}}},
				postaction={decorate}] (A3) -- (B1);
				\draw[thick, line width=1pt, decoration={markings,
					mark=at position 0.65 with {\arrow[scale=1,>=stealth]{>>}}},
				postaction={decorate}] (A3) -- (B2);
				
				\node at (0,-3.7cm) {$(\mathcal{O},\tau_1)$};
				
				\node at (90:5.5cm) {$a_1$};
				\node at (215:5.5cm) {$a_2$};
				\node at (325:5.5cm) {$a_3$};
				
				\node at (30:1.8cm) {$b_2$};
				\node at (150:1.8cm) {$b_3$};
				\node at (270:1.8cm) {$b_1$};		
			\end{scope}	
			\begin{scope}
				\coordinate (A1) at (90:5cm);
				\coordinate (A2) at (210:5cm);
				\coordinate (A3) at (330:5cm);
				
				\draw[fill=black] (A1) circle (3.5pt);
				\draw[fill=black] (A2) circle (3.5pt);	
				\draw[fill=black] (A3) circle (3.5pt);
				
				\coordinate (B1) at (270:1.25cm);
				\coordinate (B2) at (30:1.25cm);
				\coordinate (B3) at (150:1.25cm);
				
				\draw[fill=black] (B1) circle (3.5pt);
				\draw[fill=black] (B2) circle (3.5pt);	
				\draw[fill=black] (B3) circle (3.5pt);
				
				\draw[thick, line width=1pt, decoration={markings,
					mark=at position 0.55 with {\arrow[scale=1,>=stealth]{>>}}},
				postaction={decorate}] (A1) -- (A2);
				\draw[thick, line width=1pt, decoration={markings,
					mark=at position 0.55 with {\arrow[scale=1,>=stealth]{>>}}},
				postaction={decorate}] (A2) -- (A3);
				\draw[thick, line width=1pt, decoration={markings,
					mark=at position 0.55 with {\arrow[scale=1,>=stealth]{>>}}},
				postaction={decorate}] (A3) -- (A1);
				
				\draw[thick, line width=1pt, decoration={markings,
					mark=at position 0.67 with {\arrow[scale=1,>=stealth]{>>}}},
				postaction={decorate}] (B1) -- (B2);
				\draw[thick, line width=1pt, decoration={markings,
					mark=at position 0.67 with {\arrow[scale=1,>=stealth]{>>}}},
				postaction={decorate}] (B2) -- (B3);
				\draw[thick, line width=1pt, decoration={markings,
					mark=at position 0.67 with {\arrow[scale=1,>=stealth]{>>}}},
				postaction={decorate}] (B3) -- (B1);
				
				\draw[thick, line width=1pt, decoration={markings,
					mark=at position 0.65 with {\arrow[scale=1,>=stealth]{>>}}},
				postaction={decorate}] (A1) -- (B2);
				\draw[thick, line width=1pt, decoration={markings,
					mark=at position 0.65 with {\arrow[scale=1,>=stealth]{<<}}},
				postaction={decorate}] (A1) -- (B3);
				
				\draw[thick, line width=1pt, decoration={markings,
					mark=at position 0.65 with {\arrow[scale=1,>=stealth]{<<}}},
				postaction={decorate}] (A2) -- (B1);
				\draw[thick, line width=1pt, decoration={markings,
					mark=at position 0.65 with {\arrow[scale=1,>=stealth]{>>}}},
				postaction={decorate}] (A2) -- (B3);
				
				\draw[thick, line width=1pt, decoration={markings,
					mark=at position 0.65 with {\arrow[scale=1,>=stealth]{>>}}},
				postaction={decorate}] (A3) -- (B1);
				\draw[thick, line width=1pt, decoration={markings,
					mark=at position 0.65 with {\arrow[scale=1,>=stealth]{<<}}},
				postaction={decorate}] (A3) -- (B2);
				
				\node at (0,-3.7cm) {$(\mathcal{O},\tau_2)$};
				
				\node at (90:5.5cm) {$a_1$};
				\node at (215:5.5cm) {$a_2$};
				\node at (325:5.5cm) {$a_3$};
				
				\node at (30:1.8cm) {$b_2$};
				\node at (150:1.8cm) {$b_3$};
				\node at (270:1.8cm) {$b_1$};	
			\end{scope}	
			\begin{scope}[xshift=10cm]
				\coordinate (A1) at (90:5cm);
				\coordinate (A2) at (210:5cm);
				\coordinate (A3) at (330:5cm);
				
				\draw[fill=black] (A1) circle (3.5pt);
				\draw[fill=black] (A2) circle (3.5pt);	
				\draw[fill=black] (A3) circle (3.5pt);
				
				\coordinate (B1) at (270:1.25cm);
				\coordinate (B2) at (30:1.25cm);
				\coordinate (B3) at (150:1.25cm);
				
				\draw[fill=black] (B1) circle (3.5pt);
				\draw[fill=black] (B2) circle (3.5pt);	
				\draw[fill=black] (B3) circle (3.5pt);
				
				\draw[thick, line width=1pt, decoration={markings,
					mark=at position 0.55 with {\arrow[scale=1,>=stealth]{><}}},
				postaction={decorate}] (A1) -- (A2);
				\draw[thick, line width=1pt, decoration={markings,
					mark=at position 0.55 with {\arrow[scale=1,>=stealth]{>>}}},
				postaction={decorate}] (A2) -- (A3);
				\draw[thick, line width=1pt, decoration={markings,
					mark=at position 0.55 with {\arrow[scale=1,>=stealth]{><}}},
				postaction={decorate}] (A3) -- (A1);
				
				\draw[thick, line width=1pt, decoration={markings,
					mark=at position 0.65 with {\arrow[scale=1,>=stealth]{><}}},
				postaction={decorate}] (B1) -- (B2);
				\draw[thick, line width=1pt, decoration={markings,
					mark=at position 0.67 with {\arrow[scale=1,>=stealth]{>>}}},
				postaction={decorate}] (B2) -- (B3);
				\draw[thick, line width=1pt, decoration={markings,
					mark=at position 0.65 with {\arrow[scale=1,>=stealth]{><}}},
				postaction={decorate}] (B3) -- (B1);
				
				\draw[thick, line width=1pt, decoration={markings,
					mark=at position 0.65 with {\arrow[scale=1,>=stealth]{>>}}},
				postaction={decorate}] (A1) -- (B2);
				\draw[thick, line width=1pt, decoration={markings,
					mark=at position 0.65 with {\arrow[scale=1,>=stealth]{<<}}},
				postaction={decorate}] (A1) -- (B3);
				
				\draw[thick, line width=1pt, decoration={markings,
					mark=at position 0.65 with {\arrow[scale=1,>=stealth]{<<}}},
				postaction={decorate}] (A2) -- (B1);
				\draw[thick, line width=1pt, decoration={markings,
					mark=at position 0.65 with {\arrow[scale=1,>=stealth]{><}}},
				postaction={decorate}] (A2) -- (B3);
				
				\draw[thick, line width=1pt, decoration={markings,
					mark=at position 0.65 with {\arrow[scale=1,>=stealth]{>>}}},
				postaction={decorate}] (A3) -- (B1);
				\draw[thick, line width=1pt, decoration={markings,
					mark=at position 0.65 with {\arrow[scale=1,>=stealth]{><}}},
				postaction={decorate}] (A3) -- (B2);
				
				\node at (0,-3.7cm) {$(\mathcal{O},\tau_3)$};
				
				\node at (90:5.5cm) {$a_1$};
				\node at (215:5.5cm) {$a_2$};
				\node at (325:5.5cm) {$a_3$};
				
				\node at (30:1.8cm) {$b_2$};
				\node at (150:1.8cm) {$b_3$};
				\node at (270:1.8cm) {$b_1$};
			\end{scope}	
		\end{tikzpicture}
		\captionof{figure}{ }
	\end{center}
	This graph contains precisely $4$ zigzags (up to reversing):
	$$Z_1=a_1,a_2,a_3,b_1,b_2,b_3;\quad Z_2=a_2,a_3,a_1,b_2,b_3,b_1;$$ $$Z_3=a_3,a_1,a_2,b_3,b_1,b_2;\quad Z_4=a_1,b_2,a_3,b_1,a_2,b_3.$$
	Consider the following $3$ $z$-orientations of $\mathcal{O}$:
	$$\tau_1=\{Z_1,Z_2,Z^{-1}_3,Z^{-1}_4\},\quad 
	\tau_2=\{Z_1,Z_2,Z_3,Z_4\},\quad 
	\tau_3=\{Z_1,Z_2,Z^{-1}_3,Z_4\}.$$ 
	For $\tau_1$ all faces of $\mathcal{O}$ are of type I and for $\tau_2$ all faces are of type II. 
	If we choose $\tau_3$, then there are faces of both types (see Fig. 2). 
}\end{exmp}

\begin{exmp}\label{ex2}{\rm 
	Let $\mathcal{T}_{k,m}$ (where $k,m\geq 3$) be a triangulation of the torus obtained by identifying opposite sides of a $k\times m$ rectangular grid, where each square is subdivided by adding a diagonal edge connecting its top-left and bottom-right vertices. 
	The vertex set of $\mathcal{T}_{k,m}$ is $\mathbb{Z}_k\times\mathbb{Z}_m$. 
	We will write $ij$ instead of $(i,j)$ for its elements (see Fig. 3). 
	\begin{center}
		\begin{tikzpicture}[scale=0.48, xshift=0cm]
			
			\draw[fill=black] (0,0) circle (3pt);
			\draw[fill=black] (2,0) circle (3pt);
			\draw[fill=black] (4,0) circle (3pt);
			
			\draw[fill=black] (0,2) circle (3pt);
			\draw[fill=black] (2,2) circle (3pt);
			\draw[fill=black] (4,2) circle (3pt);
			
			\draw[fill=black] (0,4) circle (3pt);
			\draw[fill=black] (2,4) circle (3pt);
			\draw[fill=black] (4,4) circle (3pt);
			
			\draw[thick, line width=1pt] (0,0) -- (4,0) -- (4,4) -- (0,4) -- cycle;
			\draw[thick, line width=1pt] (0,2) -- (4,2);
			\draw[thick, line width=1pt] (2,0) -- (2,4);
			
			\draw[thick, line width=1pt] (0,4) -- (4,0);
			\draw[thick, line width=1pt] (0,2) -- (2,0);
			\draw[thick, line width=1pt] (2,4) -- (4,2);
						
			\draw[fill=black] (0,8) circle (3pt);
			\draw[fill=black] (2,8) circle (3pt);
			\draw[fill=black] (4,8) circle (3pt);
			
			\draw[fill=black] (0,10) circle (3pt);
			\draw[fill=black] (2,10) circle (3pt);
			\draw[fill=black] (4,10) circle (3pt);
			
			\draw[thick, line width=1pt] (0,8) -- (4,8) -- (4,10) -- (0,10) -- cycle;
			\draw[thick, line width=1pt] (0,10) -- (4,10);
			\draw[thick, line width=1pt] (2,8) -- (2,10);
			
			\draw[thick, line width=1pt] (2,10) -- (4,8);
			\draw[thick, line width=1pt] (0,10) -- (2,8);
			
			\draw[fill=black] (8,0) circle (3pt);
			\draw[fill=black] (10,0) circle (3pt);
			
			\draw[fill=black] (8,2) circle (3pt);
			\draw[fill=black] (10,2) circle (3pt);
			
			\draw[fill=black] (8,4) circle (3pt);
			\draw[fill=black] (10,4) circle (3pt);
			
			\draw[thick, line width=1pt] (8,0) -- (10,0) -- (10,4) -- (8,4) -- cycle;
			\draw[thick, line width=1pt] (8,2) -- (10,2);
			
			\draw[thick, line width=1pt] (8,4) -- (10,2);
			\draw[thick, line width=1pt] (8,2) -- (10,0);
			
			\draw[fill=black] (8,8) circle (3pt);
			\draw[fill=black] (10,8) circle (3pt);
			
			\draw[fill=black] (8,10) circle (3pt);
			\draw[fill=black] (10,10) circle (3pt);
			
			\draw[thick, line width=1pt] (8,8) -- (10,8) -- (10,10) -- (8,10) -- cycle;
			\draw[thick, line width=1pt] (8,10) -- (10,8);
			
			\draw [->, line width=1pt]  (-0.3,0) -- (-0.3, 10);
			\draw [->, line width=1pt]  (10.3,0) -- (10.3, 10);
			
			\draw [->, line width=1pt]  (0,-0.3) -- (10, -0.3);
			\draw [->, line width=1pt]  (0, 10.3) -- (10, 10.3);

			\node at (0,-0.73) {$0$};
			\node at (2,-0.73) {$1$};
			\node at (4,-0.73) {$2$};
			\node at (6,-0.73) {$i$};
			\node at (8,-0.73) {$k-1$};
			\node at (10,-0.73) {$0$};
			
			\node at (-0.65,0) {$0$};
			\node at (-0.65,2) {$1$};
			\node at (-0.65,4) {$2$};
			\node at (-0.65,6) {$j$};
			\node at (-1.3,8) {$m-1$};
			\node at (-0.65,10) {$0$};

			\node[white] at (12.2,8) {.}; %
			
			\draw[thick, line width=1pt, dashed] (0,4) -- (0,8);
			
			\draw[thick, line width=1pt, dashed] (2,4) -- (2,8);
			
			\draw[thick, line width=1pt, dashed] (4,4) -- (4,8);
			
			\draw[thick, line width=1pt, dashed] (8,4) -- (8,8);
			
			\draw[thick, line width=1pt, dashed] (10,4) -- (10,8);
			
			\draw[thick, line width=1pt, dashed] (4,0) -- (8,0);
			
			\draw[thick, line width=1pt, dashed] (4,2) -- (8,2);
			
			\draw[thick, line width=1pt, dashed] (4,4) -- (8,4);
			
			\draw[thick, line width=1pt, dashed] (4,8) -- (8,8);
			
			\draw[thick, line width=1pt, dashed] (4,10) -- (8,10);
			
			\draw[thick, line width=1pt, dashed] (1.6,4.4) -- (2,4);
			\draw[thick, line width=1pt, dashed] (3.6,4.4) -- (4,4);
			\draw[thick, line width=1pt, dashed] (7.6,4.4) -- (8,4);
			\draw[thick, line width=1pt, dashed] (9.6,4.4) -- (10,4);
			
			\draw[thick, line width=1pt, dashed] (0,8) -- (0.4,7.6);
			\draw[thick, line width=1pt, dashed] (2,8) -- (2.4,7.6);
			\draw[thick, line width=1pt, dashed] (4,8) -- (4.4,7.6);
			\draw[thick, line width=1pt, dashed] (8,8) -- (8.4,7.6);
			
			\draw[thick, line width=1pt, dashed] (4,10) -- (4.4,9.6);
			\draw[thick, line width=1pt, dashed] (4,4) -- (4.4,3.6);
			\draw[thick, line width=1pt, dashed] (4,2) -- (4.4,1.6);						
			
			\draw[thick, line width=1pt, dashed] (7.6,8.4) -- (8,8);
			\draw[thick, line width=1pt, dashed] (7.6,2.4) -- (8,2);
			\draw[thick, line width=1pt, dashed] (7.6,0.4) -- (8,0);
			
			\draw[thick, line width=1pt, dashed] (6,0) -- (6,10);
			\draw[thick, line width=1pt, dashed] (0,6) -- (10,6);
			
			\draw[thick, line width=1pt] (6,5.2) -- (6,6.8);
			\draw[thick, line width=1pt] (5.2,6) -- (6.8,6);
			
			\draw[fill=black] (6,6) circle (3pt);
			
			\draw[thick, line width=1pt, dashed] (6,6) -- (6.4,5.6);
			\draw[thick, line width=1pt, dashed] (5.6,6.4) -- (6,6);
			
			\node at (6.38,6.38) {$ij$};
			
		\end{tikzpicture}
		\captionof{figure}{ }
	\end{center}
	There are the following zigzags in $\mathcal{T}_{k,m}$ (as sequences of vertices):
	\begin{enumerate}
	\item[$\bullet$] the vertical zigzags 
	$$Z'_i=i0,(i+1)0,i1,(i+1)1,\dots,i(m-1),(i+1)(m-1)$$
	for every $i\in\mathbb{Z}_k$,
	\item[$\bullet$] the horizontal zigzags $$Z''_j=0j,0(j+1),1j,1(j+1),\dots,(k-1)j,(k-1)(j+1)$$
	for every $j\in\mathbb{Z}_m$,
	\item[$\bullet$] the skew zigzags $$Z'''_s=0(s+1),0s,1s,1(s-1),\dots,(k-1)(s+2),(k-1)(s+1)$$
	for every $s$ satisfying $0\leq s\leq \gcd(k,m)$
	\end{enumerate}
	and their reverses. 
	\begin{center}
		\begin{tikzpicture}[scale=0.53]
		\begin{scope}[xshift=-8cm]
			\draw[fill=black] (0,0) circle (3pt);
			\draw[fill=black] (2,0) circle (3pt);
			\draw[fill=black] (4,0) circle (3pt);
			\draw[fill=black] (6,0) circle (3pt);
			
			\draw[fill=black] (0,2) circle (3pt);
			\draw[fill=black] (2,2) circle (3pt);
			\draw[fill=black] (4,2) circle (3pt);
			\draw[fill=black] (6,2) circle (3pt);
			
			\draw[fill=black] (0,4) circle (3pt);
			\draw[fill=black] (2,4) circle (3pt);
			\draw[fill=black] (4,4) circle (3pt);
			\draw[fill=black] (6,4) circle (3pt);
			
			\draw[fill=black] (0,6) circle (3pt);
			\draw[fill=black] (2,6) circle (3pt);
			\draw[fill=black] (4,6) circle (3pt);
			\draw[fill=black] (6,6) circle (3pt);
			
			\draw[thick, line width=1pt, decoration={markings,
				mark=at position 0.65 with {\arrow[scale=1,>=stealth]{>>}}},
			postaction={decorate}] (0,0) -- (2,0);
			\draw[thick, line width=1pt, decoration={markings,
				mark=at position 0.65 with {\arrow[scale=1,>=stealth]{>>}}},
			postaction={decorate}] (2,0) -- (4,0);
			\draw[thick, line width=1pt, decoration={markings,
				mark=at position 0.65 with {\arrow[scale=1,>=stealth]{>>}}},
			postaction={decorate}] (4,0) -- (6,0);
			
			\draw[thick, line width=1pt, decoration={markings,
				mark=at position 0.65 with {\arrow[scale=1,>=stealth]{>>}}},
			postaction={decorate}] (0,2) -- (2,2);
			\draw[thick, line width=1pt, decoration={markings,
				mark=at position 0.65 with {\arrow[scale=1,>=stealth]{>>}}},
			postaction={decorate}] (2,2) -- (4,2);
			\draw[thick, line width=1pt, decoration={markings,
				mark=at position 0.65 with {\arrow[scale=1,>=stealth]{>>}}},
			postaction={decorate}] (4,2) -- (6,2);
			
			\draw[thick, line width=1pt, decoration={markings,
				mark=at position 0.65 with {\arrow[scale=1,>=stealth]{>>}}},
			postaction={decorate}] (0,4) -- (2,4);
			\draw[thick, line width=1pt, decoration={markings,
				mark=at position 0.65 with {\arrow[scale=1,>=stealth]{>>}}},
			postaction={decorate}] (2,4) -- (4,4);
			\draw[thick, line width=1pt, decoration={markings,
				mark=at position 0.65 with {\arrow[scale=1,>=stealth]{>>}}},
			postaction={decorate}] (4,4) -- (6,4);
			
			\draw[thick, line width=1pt, decoration={markings,
				mark=at position 0.65 with {\arrow[scale=1,>=stealth]{>>}}},
			postaction={decorate}] (0,6) -- (2,6);
			\draw[thick, line width=1pt, decoration={markings,
				mark=at position 0.65 with {\arrow[scale=1,>=stealth]{>>}}},
			postaction={decorate}] (2,6) -- (4,6);
			\draw[thick, line width=1pt, decoration={markings,
				mark=at position 0.65 with {\arrow[scale=1,>=stealth]{>>}}},
			postaction={decorate}] (4,6) -- (6,6);
			
			\draw[thick, line width=1pt, decoration={markings,
				mark=at position 0.63 with {\arrow[scale=1,>=stealth]{><}}},
			postaction={decorate}] (0,0) -- (0,2);
			\draw[thick, line width=1pt, decoration={markings,
				mark=at position 0.63 with {\arrow[scale=1,>=stealth]{><}}},
			postaction={decorate}] (0,2) -- (0,4);
			\draw[thick, line width=1pt, decoration={markings,
				mark=at position 0.63 with {\arrow[scale=1,>=stealth]{><}}},
			postaction={decorate}] (0,4) -- (0,6);
			
			\draw[thick, line width=1pt, decoration={markings,
				mark=at position 0.63 with {\arrow[scale=1,>=stealth]{><}}},
			postaction={decorate}] (2,0) -- (2,2);
			\draw[thick, line width=1pt, decoration={markings,
				mark=at position 0.63 with {\arrow[scale=1,>=stealth]{><}}},
			postaction={decorate}] (2,2) -- (2,4);
			\draw[thick, line width=1pt, decoration={markings,
				mark=at position 0.63 with {\arrow[scale=1,>=stealth]{><}}},
			postaction={decorate}] (2,4) -- (2,6);
			
			\draw[thick, line width=1pt, decoration={markings,
				mark=at position 0.63 with {\arrow[scale=1,>=stealth]{><}}},
			postaction={decorate}] (4,0) -- (4,2);
			\draw[thick, line width=1pt, decoration={markings,
				mark=at position 0.63 with {\arrow[scale=1,>=stealth]{><}}},
			postaction={decorate}] (4,2) -- (4,4);
			\draw[thick, line width=1pt, decoration={markings,
				mark=at position 0.63 with {\arrow[scale=1,>=stealth]{><}}},
			postaction={decorate}] (4,4) -- (4,6);
			
			\draw[thick, line width=1pt, decoration={markings,
				mark=at position 0.63 with {\arrow[scale=1,>=stealth]{><}}},
			postaction={decorate}] (6,0) -- (6,2);
			\draw[thick, line width=1pt, decoration={markings,
				mark=at position 0.63 with {\arrow[scale=1,>=stealth]{><}}},
			postaction={decorate}] (6,2) -- (6,4);
			\draw[thick, line width=1pt, decoration={markings,
				mark=at position 0.63 with {\arrow[scale=1,>=stealth]{><}}},
			postaction={decorate}] (6,4) -- (6,6);
			
			\draw[thick, line width=1pt, decoration={markings,
				mark=at position 0.6 with {\arrow[scale=1,>=stealth]{><}}},
			postaction={decorate}] (0,2) -- (2,0);
			
			\draw[thick, line width=1pt, decoration={markings,
				mark=at position 0.6 with {\arrow[scale=1,>=stealth]{><}}},
			postaction={decorate}] (0,4) -- (2,2);
			\draw[thick, line width=1pt, decoration={markings,
				mark=at position 0.6 with {\arrow[scale=1,>=stealth]{><}}},
			postaction={decorate}] (2,2) -- (4,0);
			
			\draw[thick, line width=1pt, decoration={markings,
				mark=at position 0.6 with {\arrow[scale=1,>=stealth]{><}}},
			postaction={decorate}] (0,6) -- (2,4);
			\draw[thick, line width=1pt, decoration={markings,
				mark=at position 0.6 with {\arrow[scale=1,>=stealth]{><}}},
			postaction={decorate}] (2,4) -- (4,2);
			\draw[thick, line width=1pt, decoration={markings,
				mark=at position 0.6 with {\arrow[scale=1,>=stealth]{><}}},
			postaction={decorate}] (4,2) -- (6,0);
			
			\draw[thick, line width=1pt, decoration={markings,
				mark=at position 0.6 with {\arrow[scale=1,>=stealth]{><}}},
			postaction={decorate}] (2,6) -- (4,4);
			\draw[thick, line width=1pt, decoration={markings,
				mark=at position 0.6 with {\arrow[scale=1,>=stealth]{><}}},
			postaction={decorate}] (4,4) -- (6,2);
			
			\draw[thick, line width=1pt, decoration={markings,
				mark=at position 0.6 with {\arrow[scale=1,>=stealth]{><}}},
			postaction={decorate}] (4,6) -- (6,4);

			\draw [->, line width=1pt]  (-0.3,0) -- (-0.3, 6);
			\draw [->, line width=1pt]  (6.3,0) -- (6.3, 6);
			
			\draw [->, line width=1pt]  (0,-0.3) -- (6, -0.3);
			\draw [->, line width=1pt]  (0, 6.3) -- (6, 6.3);

			\node at (0,-0.7) {$0$};
			\node at (2,-0.7) {$1$};
			\node at (4,-0.7) {$2$};
			\node at (6,-0.7) {$0$};
			
			\node at (-0.65,0) {$0$};
			\node at (-0.65,2) {$1$};
			\node at (-0.65,4) {$2$};
			\node at (-0.65,6) {$0$};
			
			\node at (3,-1.5) {$(\mathcal{T}_{3,3},\tau_1)$};
		\end{scope}
		\begin{scope}
			\draw[fill=black] (0,0) circle (3pt);
			\draw[fill=black] (2,0) circle (3pt);
			\draw[fill=black] (4,0) circle (3pt);
			\draw[fill=black] (6,0) circle (3pt);
			
			\draw[fill=black] (0,2) circle (3pt);
			\draw[fill=black] (2,2) circle (3pt);
			\draw[fill=black] (4,2) circle (3pt);
			\draw[fill=black] (6,2) circle (3pt);
			
			\draw[fill=black] (0,4) circle (3pt);
			\draw[fill=black] (2,4) circle (3pt);
			\draw[fill=black] (4,4) circle (3pt);
			\draw[fill=black] (6,4) circle (3pt);
			
			\draw[fill=black] (0,6) circle (3pt);
			\draw[fill=black] (2,6) circle (3pt);
			\draw[fill=black] (4,6) circle (3pt);
			\draw[fill=black] (6,6) circle (3pt);
			
			\draw[thick, line width=1pt, decoration={markings,
				mark=at position 0.65 with {\arrow[scale=1,>=stealth]{>>}}},
			postaction={decorate}] (0,0) -- (2,0);
			\draw[thick, line width=1pt, decoration={markings,
				mark=at position 0.65 with {\arrow[scale=1,>=stealth]{>>}}},
			postaction={decorate}] (2,0) -- (4,0);
			\draw[thick, line width=1pt, decoration={markings,
				mark=at position 0.65 with {\arrow[scale=1,>=stealth]{>>}}},
			postaction={decorate}] (4,0) -- (6,0);
			
			\draw[thick, line width=1pt, decoration={markings,
				mark=at position 0.65 with {\arrow[scale=1,>=stealth]{>>}}},
			postaction={decorate}] (0,2) -- (2,2);
			\draw[thick, line width=1pt, decoration={markings,
				mark=at position 0.65 with {\arrow[scale=1,>=stealth]{>>}}},
			postaction={decorate}] (2,2) -- (4,2);
			\draw[thick, line width=1pt, decoration={markings,
				mark=at position 0.65 with {\arrow[scale=1,>=stealth]{>>}}},
			postaction={decorate}] (4,2) -- (6,2);
			
			\draw[thick, line width=1pt, decoration={markings,
				mark=at position 0.65 with {\arrow[scale=1,>=stealth]{>>}}},
			postaction={decorate}] (0,4) -- (2,4);
			\draw[thick, line width=1pt, decoration={markings,
				mark=at position 0.65 with {\arrow[scale=1,>=stealth]{>>}}},
			postaction={decorate}] (2,4) -- (4,4);
			\draw[thick, line width=1pt, decoration={markings,
				mark=at position 0.65 with {\arrow[scale=1,>=stealth]{>>}}},
			postaction={decorate}] (4,4) -- (6,4);
			
			\draw[thick, line width=1pt, decoration={markings,
				mark=at position 0.65 with {\arrow[scale=1,>=stealth]{>>}}},
			postaction={decorate}] (0,6) -- (2,6);
			\draw[thick, line width=1pt, decoration={markings,
				mark=at position 0.65 with {\arrow[scale=1,>=stealth]{>>}}},
			postaction={decorate}] (2,6) -- (4,6);
			\draw[thick, line width=1pt, decoration={markings,
				mark=at position 0.65 with {\arrow[scale=1,>=stealth]{>>}}},
			postaction={decorate}] (4,6) -- (6,6);
			
			\draw[thick, line width=1pt, decoration={markings,
				mark=at position 0.63 with {\arrow[scale=1,>=stealth]{<<}}},
			postaction={decorate}] (0,0) -- (0,2);
			\draw[thick, line width=1pt, decoration={markings,
				mark=at position 0.63 with {\arrow[scale=1,>=stealth]{<<}}},
			postaction={decorate}] (0,2) -- (0,4);
			\draw[thick, line width=1pt, decoration={markings,
				mark=at position 0.63 with {\arrow[scale=1,>=stealth]{<<}}},
			postaction={decorate}] (0,4) -- (0,6);
			
			\draw[thick, line width=1pt, decoration={markings,
				mark=at position 0.63 with {\arrow[scale=1,>=stealth]{<<}}},
			postaction={decorate}] (2,0) -- (2,2);
			\draw[thick, line width=1pt, decoration={markings,
				mark=at position 0.63 with {\arrow[scale=1,>=stealth]{<<}}},
			postaction={decorate}] (2,2) -- (2,4);
			\draw[thick, line width=1pt, decoration={markings,
				mark=at position 0.63 with {\arrow[scale=1,>=stealth]{<<}}},
			postaction={decorate}] (2,4) -- (2,6);
			
			\draw[thick, line width=1pt, decoration={markings,
				mark=at position 0.63 with {\arrow[scale=1,>=stealth]{<<}}},
			postaction={decorate}] (4,0) -- (4,2);
			\draw[thick, line width=1pt, decoration={markings,
				mark=at position 0.63 with {\arrow[scale=1,>=stealth]{<<}}},
			postaction={decorate}] (4,2) -- (4,4);
			\draw[thick, line width=1pt, decoration={markings,
				mark=at position 0.63 with {\arrow[scale=1,>=stealth]{<<}}},
			postaction={decorate}] (4,4) -- (4,6);
			
			\draw[thick, line width=1pt, decoration={markings,
				mark=at position 0.63 with {\arrow[scale=1,>=stealth]{<<}}},
			postaction={decorate}] (6,0) -- (6,2);
			\draw[thick, line width=1pt, decoration={markings,
				mark=at position 0.63 with {\arrow[scale=1,>=stealth]{<<}}},
			postaction={decorate}] (6,2) -- (6,4);
			\draw[thick, line width=1pt, decoration={markings,
				mark=at position 0.63 with {\arrow[scale=1,>=stealth]{<<}}},
			postaction={decorate}] (6,4) -- (6,6);
			
			\draw[thick, line width=1pt, decoration={markings,
				mark=at position 0.6 with {\arrow[scale=1,>=stealth]{<<}}},
			postaction={decorate}] (0,2) -- (2,0);
			
			\draw[thick, line width=1pt, decoration={markings,
				mark=at position 0.6 with {\arrow[scale=1,>=stealth]{<<}}},
			postaction={decorate}] (0,4) -- (2,2);
			\draw[thick, line width=1pt, decoration={markings,
				mark=at position 0.6 with {\arrow[scale=1,>=stealth]{<<}}},
			postaction={decorate}] (2,2) -- (4,0);
			
			\draw[thick, line width=1pt, decoration={markings,
				mark=at position 0.6 with {\arrow[scale=1,>=stealth]{<<}}},
			postaction={decorate}] (0,6) -- (2,4);
			\draw[thick, line width=1pt, decoration={markings,
				mark=at position 0.6 with {\arrow[scale=1,>=stealth]{<<}}},
			postaction={decorate}] (2,4) -- (4,2);
			\draw[thick, line width=1pt, decoration={markings,
				mark=at position 0.6 with {\arrow[scale=1,>=stealth]{<<}}},
			postaction={decorate}] (4,2) -- (6,0);
			
			\draw[thick, line width=1pt, decoration={markings,
				mark=at position 0.6 with {\arrow[scale=1,>=stealth]{<<}}},
			postaction={decorate}] (2,6) -- (4,4);
			\draw[thick, line width=1pt, decoration={markings,
				mark=at position 0.6 with {\arrow[scale=1,>=stealth]{<<}}},
			postaction={decorate}] (4,4) -- (6,2);
			
			\draw[thick, line width=1pt, decoration={markings,
				mark=at position 0.6 with {\arrow[scale=1,>=stealth]{<<}}},
			postaction={decorate}] (4,6) -- (6,4);

			\draw [->, line width=1pt]  (-0.3,0) -- (-0.3, 6);
			\draw [->, line width=1pt]  (6.3,0) -- (6.3, 6);
			
			\draw [->, line width=1pt]  (0,-0.3) -- (6, -0.3);
			\draw [->, line width=1pt]  (0, 6.3) -- (6, 6.3);

			\node at (0,-0.7) {$0$};
			\node at (2,-0.7) {$1$};
			\node at (4,-0.7) {$2$};
			\node at (6,-0.7) {$0$};
			
			\node at (-0.65,0) {$0$};
			\node at (-0.65,2) {$1$};
			\node at (-0.65,4) {$2$};
			\node at (-0.65,6) {$0$};
			
			\node at (3,-1.5) {$(\mathcal{T}_{3,3},\tau_2)$};
		\end{scope}
		\begin{scope}[xshift=8cm,yshift=-1cm]
			\draw[fill=black] (0,0) circle (3pt);
			\draw[fill=black] (2,0) circle (3pt);
			\draw[fill=black] (4,0) circle (3pt);
			\draw[fill=black] (6,0) circle (3pt);
			
			\draw[fill=black] (0,2) circle (3pt);
			\draw[fill=black] (2,2) circle (3pt);
			\draw[fill=black] (4,2) circle (3pt);
			\draw[fill=black] (6,2) circle (3pt);
			
			\draw[fill=black] (0,4) circle (3pt);
			\draw[fill=black] (2,4) circle (3pt);
			\draw[fill=black] (4,4) circle (3pt);
			\draw[fill=black] (6,4) circle (3pt);
			
			\draw[fill=black] (0,6) circle (3pt);
			\draw[fill=black] (2,6) circle (3pt);
			\draw[fill=black] (4,6) circle (3pt);
			\draw[fill=black] (6,6) circle (3pt);
			
			\draw[thick, line width=1pt, decoration={markings,
				mark=at position 0.65 with {\arrow[scale=1,>=stealth]{>>}}},
			postaction={decorate}] (0,0) -- (2,0);
			\draw[thick, line width=1pt, decoration={markings,
				mark=at position 0.65 with {\arrow[scale=1,>=stealth]{>>}}},
			postaction={decorate}] (2,0) -- (4,0);
			\draw[thick, line width=1pt, decoration={markings,
				mark=at position 0.65 with {\arrow[scale=1,>=stealth]{>>}}},
			postaction={decorate}] (4,0) -- (6,0);
			
			\draw[thick, line width=1pt, decoration={markings,
				mark=at position 0.65 with {\arrow[scale=1,>=stealth]{>>}}},
			postaction={decorate}] (0,2) -- (2,2);
			\draw[thick, line width=1pt, decoration={markings,
				mark=at position 0.65 with {\arrow[scale=1,>=stealth]{>>}}},
			postaction={decorate}] (2,2) -- (4,2);
			\draw[thick, line width=1pt, decoration={markings,
				mark=at position 0.65 with {\arrow[scale=1,>=stealth]{>>}}},
			postaction={decorate}] (4,2) -- (6,2);
			
			\draw[thick, line width=1pt, decoration={markings,
				mark=at position 0.65 with {\arrow[scale=1,>=stealth]{>>}}},
			postaction={decorate}] (0,4) -- (2,4);
			\draw[thick, line width=1pt, decoration={markings,
				mark=at position 0.65 with {\arrow[scale=1,>=stealth]{>>}}},
			postaction={decorate}] (2,4) -- (4,4);
			\draw[thick, line width=1pt, decoration={markings,
				mark=at position 0.65 with {\arrow[scale=1,>=stealth]{>>}}},
			postaction={decorate}] (4,4) -- (6,4);
			
			\draw[thick, line width=1pt, decoration={markings,
				mark=at position 0.65 with {\arrow[scale=1,>=stealth]{>>}}},
			postaction={decorate}] (0,6) -- (2,6);
			\draw[thick, line width=1pt, decoration={markings,
				mark=at position 0.65 with {\arrow[scale=1,>=stealth]{>>}}},
			postaction={decorate}] (2,6) -- (4,6);
			\draw[thick, line width=1pt, decoration={markings,
				mark=at position 0.65 with {\arrow[scale=1,>=stealth]{>>}}},
			postaction={decorate}] (4,6) -- (6,6);
			
			\draw[thick, line width=1pt, decoration={markings,
				mark=at position 0.65 with {\arrow[scale=1,>=stealth]{>>}}},
			postaction={decorate}] (0,8) -- (2,8);
			\draw[thick, line width=1pt, decoration={markings,
				mark=at position 0.65 with {\arrow[scale=1,>=stealth]{>>}}},
			postaction={decorate}] (2,8) -- (4,8);
			\draw[thick, line width=1pt, decoration={markings,
				mark=at position 0.65 with {\arrow[scale=1,>=stealth]{>>}}},
			postaction={decorate}] (4,8) -- (6,8);
			
			\draw[thick, line width=1pt, decoration={markings,
				mark=at position 0.63 with {\arrow[scale=1,>=stealth]{<<}}},
			postaction={decorate}] (0,0) -- (0,2);
			\draw[thick, line width=1pt, decoration={markings,
				mark=at position 0.63 with {\arrow[scale=1,>=stealth]{<<}}},
			postaction={decorate}] (0,2) -- (0,4);
			\draw[thick, line width=1pt, decoration={markings,
				mark=at position 0.63 with {\arrow[scale=1,>=stealth]{<<}}},
			postaction={decorate}] (0,4) -- (0,6);
			\draw[thick, line width=1pt, decoration={markings,
				mark=at position 0.63 with {\arrow[scale=1,>=stealth]{<<}}},
			postaction={decorate}] (0,6) -- (0,8);
			
			\draw[thick, line width=1pt, decoration={markings,
				mark=at position 0.63 with {\arrow[scale=1,>=stealth]{<<}}},
			postaction={decorate}] (2,0) -- (2,2);
			\draw[thick, line width=1pt, decoration={markings,
				mark=at position 0.63 with {\arrow[scale=1,>=stealth]{<<}}},
			postaction={decorate}] (2,2) -- (2,4);
			\draw[thick, line width=1pt, decoration={markings,
				mark=at position 0.63 with {\arrow[scale=1,>=stealth]{<<}}},
			postaction={decorate}] (2,4) -- (2,6);
			\draw[thick, line width=1pt, decoration={markings,
				mark=at position 0.63 with {\arrow[scale=1,>=stealth]{<<}}},
			postaction={decorate}] (2,6) -- (2,8);
			
			\draw[thick, line width=1pt, decoration={markings,
				mark=at position 0.63 with {\arrow[scale=1,>=stealth]{<<}}},
			postaction={decorate}] (4,0) -- (4,2);
			\draw[thick, line width=1pt, decoration={markings,
				mark=at position 0.63 with {\arrow[scale=1,>=stealth]{<<}}},
			postaction={decorate}] (4,2) -- (4,4);
			\draw[thick, line width=1pt, decoration={markings,
				mark=at position 0.63 with {\arrow[scale=1,>=stealth]{<<}}},
			postaction={decorate}] (4,4) -- (4,6);
			\draw[thick, line width=1pt, decoration={markings,
				mark=at position 0.63 with {\arrow[scale=1,>=stealth]{<<}}},
			postaction={decorate}] (4,6) -- (4,8);
			
			\draw[thick, line width=1pt, decoration={markings,
				mark=at position 0.63 with {\arrow[scale=1,>=stealth]{<<}}},
			postaction={decorate}] (6,0) -- (6,2);
			\draw[thick, line width=1pt, decoration={markings,
				mark=at position 0.63 with {\arrow[scale=1,>=stealth]{<<}}},
			postaction={decorate}] (6,2) -- (6,4);
			\draw[thick, line width=1pt, decoration={markings,
				mark=at position 0.63 with {\arrow[scale=1,>=stealth]{<<}}},
			postaction={decorate}] (6,4) -- (6,6);
			\draw[thick, line width=1pt, decoration={markings,
				mark=at position 0.63 with {\arrow[scale=1,>=stealth]{<<}}},
			postaction={decorate}] (6,6) -- (6,8);
			
			\draw[thick, line width=1pt, decoration={markings,
				mark=at position 0.6 with {\arrow[scale=1,>=stealth]{<<}}},
			postaction={decorate}] (0,2) -- (2,0);
			
			\draw[thick, line width=1pt, decoration={markings,
				mark=at position 0.6 with {\arrow[scale=1,>=stealth]{<<}}},
			postaction={decorate}] (0,4) -- (2,2);
			\draw[thick, line width=1pt, decoration={markings,
				mark=at position 0.6 with {\arrow[scale=1,>=stealth]{<<}}},
			postaction={decorate}] (2,2) -- (4,0);
			
			\draw[thick, line width=1pt, decoration={markings,
				mark=at position 0.6 with {\arrow[scale=1,>=stealth]{<<}}},
			postaction={decorate}] (0,6) -- (2,4);
			\draw[thick, line width=1pt, decoration={markings,
				mark=at position 0.6 with {\arrow[scale=1,>=stealth]{<<}}},
			postaction={decorate}] (2,4) -- (4,2);
			\draw[thick, line width=1pt, decoration={markings,
				mark=at position 0.6 with {\arrow[scale=1,>=stealth]{<<}}},
			postaction={decorate}] (4,2) -- (6,0);
			
			\draw[thick, line width=1pt, decoration={markings,
				mark=at position 0.6 with {\arrow[scale=1,>=stealth]{<<}}},
			postaction={decorate}] (2,6) -- (4,4);
			\draw[thick, line width=1pt, decoration={markings,
				mark=at position 0.6 with {\arrow[scale=1,>=stealth]{<<}}},
			postaction={decorate}] (4,4) -- (6,2);
			
			\draw[thick, line width=1pt, decoration={markings,
				mark=at position 0.6 with {\arrow[scale=1,>=stealth]{<<}}},
			postaction={decorate}] (4,6) -- (6,4);
			
			\draw[thick, line width=1pt, decoration={markings,
				mark=at position 0.6 with {\arrow[scale=1,>=stealth]{<<}}},
			postaction={decorate}] (0,8) -- (2,6);
			\draw[thick, line width=1pt, decoration={markings,
				mark=at position 0.6 with {\arrow[scale=1,>=stealth]{<<}}},
			postaction={decorate}] (2,8) -- (4,6);
			\draw[thick, line width=1pt, decoration={markings,
				mark=at position 0.6 with {\arrow[scale=1,>=stealth]{<<}}},
			postaction={decorate}] (4,8) -- (6,6);

			\draw [->, line width=1pt]  (-0.3,0) -- (-0.3, 8);
			\draw [->, line width=1pt]  (6.3,0) -- (6.3, 8);
			
			\draw [->, line width=1pt]  (0,-0.3) -- (6, -0.3);
			\draw [->, line width=1pt]  (0, 8.3) -- (6, 8.3);

			\node at (0,-0.7) {$0$};
			\node at (2,-0.7) {$1$};
			\node at (4,-0.7) {$2$};
			\node at (6,-0.7) {$0$};
			
			\node at (-0.65,0) {$0$};
			\node at (-0.65,2) {$1$};
			\node at (-0.65,4) {$2$};
			\node at (-0.65,6) {$3$};
			\node at (-0.65,8) {$0$};
			
			\node at (3,-1.5) {$(\mathcal{T}_{3,4},\tau_2)$};
		\end{scope}
			
		\end{tikzpicture}
		\captionof{figure}{ }
	\end{center}
	We define the sets of zigzags
	$$\kappa_1=\{Z'_i:i\in\mathbb{Z}_k\},\quad\kappa_2=\{Z''_j:j\in\mathbb{Z}_m\},\quad\kappa_3=\{Z'''_s:0\leq s\leq \gcd(k,m)\}$$
	and for each $r=1,2,3$ we denote by $\kappa^{-1}_r$ the set of zigzags obtained by reversing those in $\kappa_r$. 
	Then, for the $z$-orientation $\tau_1=\kappa_1\cup\kappa_2\cup\kappa_3$, all faces of $\mathcal{T}_{k,m}$ are of type I and, for $\tau_2=\kappa_1\cup\kappa^{-1}_2\cup\kappa_3$, all faces are of type II 
	(see Fig. 4). 
}\end{exmp}

\begin{exmp}\label{ex3}{\rm 
	Consider the triangulation $\mathcal{P}$ of the real projective plane presented in Fig. 5. 
		\begin{center} 
		\begin{tikzpicture}[scale=0.65] 
				\coordinate (A1) at (90:6cm);
				\coordinate (A2) at (30:6cm);
				\coordinate (A3) at (330:6cm);
				\coordinate (a1) at (270:6cm);
				\coordinate (a2) at (210:6cm);
				\coordinate (a3) at (150:6cm);
				
				\coordinate (A4) at (30:3cm);
				\coordinate (A5) at (270:3cm);
				\coordinate (A6) at (150:3cm);
				
				\coordinate (B1) at ($ (0,0)!0.5!($(A1)+(A2)$) $);
				\coordinate (B2) at ($ (0,0)!0.5!($(A2)+(A3)$) $);
				\coordinate (B3) at ($ (0,0)!0.5!($(A3)+(a1)$) $);
				\coordinate (b1) at ($ (0,0)!0.5!($(a1)+(a2)$) $);
				\coordinate (b2) at ($ (0,0)!0.5!($(a2)+(a3)$) $);
				\coordinate (b3) at ($ (0,0)!0.5!($(a3)+(A1)$) $);
				
				\coordinate (B4) at ($ (0,0)!0.5!($(A4)+(A2)$) $);
				\coordinate (B5) at ($ (0,0)!0.5!($(A4)+(A3)$) $);
				
				\coordinate (B6) at ($ (0,0)!0.5!($(A5)+(A3)$) $);
				\coordinate (B7) at ($ (0,0)!0.5!($(A5)+(a1)$) $);
				\coordinate (B8) at ($ (0,0)!0.5!($(A5)+(a2)$) $);
				
				\coordinate (B9) at ($ (0,0)!0.5!($(A6)+(a2)$) $);
				\coordinate (B10) at ($ (0,0)!0.5!($(A6)+(a3)$) $);
				
				\coordinate (B11) at ($ (0,0)!0.5!($(A1)+(A6)$) $);
				\coordinate (B12) at ($ (0,0)!0.5!($(A1)+(A4)$) $);
				
				\coordinate (B13) at ($ (0,0)!0.5!($(A6)+(A4)$) $);
				\coordinate (B14) at ($ (0,0)!0.5!($(A5)+(A4)$) $);
				\coordinate (B15) at ($ (0,0)!0.5!($(A5)+(A6)$) $);
				
				\coordinate (C1) at ($ (0,0)!0.333333!($(A1)+(A2)+(A4)$) $);
				\coordinate (C2) at ($ (0,0)!0.333333!($(A2)+(A3)+(A4)$) $);
				\coordinate (C3) at ($ (0,0)!0.333333!($(a1)+(A3)+(A5)$) $);
				\coordinate (C4) at ($ (0,0)!0.333333!($(a1)+(a2)+(A5)$) $);
				\coordinate (C5) at ($ (0,0)!0.333333!($(a2)+(a3)+(A6)$) $);
				\coordinate (C6) at ($ (0,0)!0.333333!($(a3)+(A1)+(A6)$) $);
				\coordinate (C7) at ($ (0,0)!0.333333!($(A1)+(A4)+(A6)$) $);
				\coordinate (C8) at ($ (0,0)!0.333333!($(A3)+(A4)+(A5)$) $);
				\coordinate (C9) at ($ (0,0)!0.333333!($(a2)+(A5)+(A6)$) $);
				\coordinate (C10) at ($ (0,0)!0.333333!($(A4)+(A5)+(A6)$) $);
				
				\draw[thick, line width=0.5pt, decoration={markings,
					mark=at position 0.65 with {\arrow[scale=1,>=stealth]{>>}}},
				postaction={decorate}] (A1) -- (B1);
				\draw[thick, line width=1.25pt, decoration={markings, mark=at position 0.65 with {\arrow[scale=1,>=stealth,color=black]{>>}}},
				postaction={decorate}] (A2) -- (B1);%
				\draw[thick, line width=0.5pt, decoration={markings,
					mark=at position 0.65 with {\arrow[scale=1,>=stealth]{>>}}},
				postaction={decorate}] (A2) -- (B2);
				\draw[thick, line width=0.5pt, decoration={markings,
					mark=at position 0.65 with {\arrow[scale=1,>=stealth]{>>}}},
				postaction={decorate}] (A3) -- (B2);
				\draw[thick, line width=0.5pt, decoration={markings,
					mark=at position 0.65 with {\arrow[scale=1,>=stealth]{>>}}},
				postaction={decorate}] (A3) -- (B3);
				\draw[thick, line width=0.5pt, decoration={markings,
					mark=at position 0.65 with {\arrow[scale=1,>=stealth]{>>}}},
				postaction={decorate}] (a1) -- (B3);
				\draw[thick, line width=0.5pt, decoration={markings,
					mark=at position 0.65 with {\arrow[scale=1,>=stealth]{>>}}},
				postaction={decorate}] (a1) -- (b1);
				\draw[thick, line width=1.25pt, decoration={markings, mark=at position 0.65 with {\arrow[scale=1,>=stealth,color=black]{>>}}},
				postaction={decorate}] (a2) -- (b1);%
				\draw[thick, line width=0.5pt, decoration={markings,
					mark=at position 0.65 with {\arrow[scale=1,>=stealth]{>>}}},
				postaction={decorate}] (a2) -- (b2);
				\draw[thick, line width=0.5pt, decoration={markings,
					mark=at position 0.65 with {\arrow[scale=1,>=stealth]{>>}}},
				postaction={decorate}] (a3) -- (b2);
				\draw[thick, line width=0.5pt, decoration={markings,
					mark=at position 0.65 with {\arrow[scale=1,>=stealth]{>>}}},
				postaction={decorate}] (a3) -- (b3);
				\draw[thick, line width=0.5pt, decoration={markings,
					mark=at position 0.65 with {\arrow[scale=1,>=stealth]{>>}}},
				postaction={decorate}] (A1) -- (b3);
				
				\draw[thick, line width=0.5pt, decoration={markings,
					mark=at position 0.65 with {\arrow[scale=1,>=stealth]{>>}}},
				postaction={decorate}] (A4) -- (B14);
				\draw[thick, line width=0.5pt, decoration={markings,
					mark=at position 0.65 with {\arrow[scale=1,>=stealth]{>>}}},
				postaction={decorate}] (A5) -- (B14);
				\draw[thick, line width=0.5pt, decoration={markings,
					mark=at position 0.65 with {\arrow[scale=1,>=stealth]{>>}}},
				postaction={decorate}] (A5) -- (B15);
				\draw[thick, line width=1.25pt, decoration={markings, mark=at position 0.65 with {\arrow[scale=1,>=stealth,color=black]{>>}}},
				postaction={decorate}] (A6) -- (B15);%
				\draw[thick, line width=0.5pt, decoration={markings,
					mark=at position 0.65 with {\arrow[scale=1,>=stealth]{>>}}},
				postaction={decorate}] (A6) -- (B13);
				\draw[thick, line width=1.25pt, decoration={markings, mark=at position 0.65 with {\arrow[scale=1,>=stealth,color=black]{>>}}},
				postaction={decorate}] (A4) -- (B13);%
				
				\draw[thick, line width=1.25pt, decoration={markings, mark=at position 0.65 with {\arrow[scale=1,>=stealth,color=black]{>>}}},
				postaction={decorate}] (A1) -- (B12);%
				\draw[thick, line width=0.5pt, decoration={markings,
					mark=at position 0.65 with {\arrow[scale=1,>=stealth]{>>}}},
				postaction={decorate}] (A4) -- (B12);
				\draw[thick, line width=0.5pt, decoration={markings,
					mark=at position 0.65 with {\arrow[scale=1,>=stealth]{>>}}},
				postaction={decorate}] (A1) -- (B11);
				\draw[thick, line width=0.5pt, decoration={markings,
					mark=at position 0.65 with {\arrow[scale=1,>=stealth]{>>}}},
				postaction={decorate}] (A6) -- (B11);
				
				\draw[thick, line width=0.5pt, decoration={markings,
					mark=at position 0.65 with {\arrow[scale=1,>=stealth]{>>}}},
				postaction={decorate}] (A3) -- (B5);
				\draw[thick, line width=0.5pt, decoration={markings,
					mark=at position 0.65 with {\arrow[scale=1,>=stealth]{>>}}},
				postaction={decorate}] (A4) -- (B5);
				\draw[thick, line width=0.5pt, decoration={markings,
					mark=at position 0.65 with {\arrow[scale=1,>=stealth]{>>}}},
				postaction={decorate}] (A3) -- (B6);
				\draw[thick, line width=0.5pt, decoration={markings,
					mark=at position 0.65 with {\arrow[scale=1,>=stealth]{>>}}},
				postaction={decorate}] (A5) -- (B6);
				
				\draw[thick, line width=0.5pt, decoration={markings,
					mark=at position 0.65 with {\arrow[scale=1,>=stealth]{>>}}},
				postaction={decorate}] (a2) -- (B8);
				\draw[thick, line width=1.25pt, decoration={markings, mark=at position 0.65 with {\arrow[scale=1,>=stealth,color=black]{>>}}},
				postaction={decorate}] (A5) -- (B8);%
				\draw[thick, line width=0.5pt, decoration={markings,
					mark=at position 0.65 with {\arrow[scale=1,>=stealth]{>>}}},
				postaction={decorate}] (a2) -- (B9);
				\draw[thick, line width=0.5pt, decoration={markings,
					mark=at position 0.65 with {\arrow[scale=1,>=stealth]{>>}}},
				postaction={decorate}] (A6) -- (B9);
				
				\draw[thick, line width=0.5pt, decoration={markings,
					mark=at position 0.65 with {\arrow[scale=1,>=stealth]{>>}}},
				postaction={decorate}] (A2) -- (B4);
				\draw[thick, line width=0.5pt, decoration={markings,
					mark=at position 0.65 with {\arrow[scale=1,>=stealth]{>>}}},
				postaction={decorate}] (A4) -- (B4);
				\draw[thick, line width=0.5pt, decoration={markings,
					mark=at position 0.7 with {\arrow[scale=1,>=stealth]{>>}}},
				postaction={decorate}] (a1) -- (B7);
				\draw[thick, line width=0.5pt, decoration={markings,
					mark=at position 0.7 with {\arrow[scale=1,>=stealth]{>>}}},
				postaction={decorate}] (A5) -- (B7);
				\draw[thick, line width=0.5pt, decoration={markings,
					mark=at position 0.65 with {\arrow[scale=1,>=stealth]{>>}}},
				postaction={decorate}] (a3) -- (B10);
				\draw[thick, line width=0.5pt, decoration={markings,
					mark=at position 0.65 with {\arrow[scale=1,>=stealth]{>>}}},
				postaction={decorate}] (A6) -- (B10);
				
				\draw[thick, line width=1.25pt, decoration={markings, mark=at position 0.65 with {\arrow[scale=1,>=stealth,color=black]{>>}}},
				postaction={decorate}] (C1) -- (A1);%
				\draw[thick, line width=0.5pt, decoration={markings,
					mark=at position 0.65 with {\arrow[scale=1,>=stealth]{>>}}},
				postaction={decorate}] (C1) -- (A2);
				\draw[thick, line width=0.5pt, decoration={markings,
					mark=at position 0.65 with {\arrow[scale=1,>=stealth]{>>}}},
				postaction={decorate}] (C1) -- (A4);
				\draw[thick, line width=1.25pt, decoration={markings, mark=at position 0.8 with {\arrow[scale=1,>=stealth,color=black]{>>}}},
				postaction={decorate}] (B1) -- (C1);%
				\draw[thick, line width=0.5pt, decoration={markings,
					mark=at position 0.65 with {\arrow[scale=1,>=stealth]{>>}}},
				postaction={decorate}] (B4) -- (C1);
				\draw[thick, line width=0.5pt, decoration={markings,
					mark=at position 0.65 with {\arrow[scale=1,>=stealth]{>>}}},
				postaction={decorate}] (B12) -- (C1);
				
				\draw[thick, line width=0.5pt, decoration={markings,
					mark=at position 0.65 with {\arrow[scale=1,>=stealth]{>>}}},
				postaction={decorate}] (C2) -- (A2);
				\draw[thick, line width=0.5pt, decoration={markings,
					mark=at position 0.65 with {\arrow[scale=1,>=stealth]{>>}}},
				postaction={decorate}] (C2) -- (A3);
				\draw[thick, line width=0.5pt, decoration={markings,
					mark=at position 0.65 with {\arrow[scale=1,>=stealth]{>>}}},
				postaction={decorate}] (C2) -- (A4);
				\draw[thick, line width=0.5pt, decoration={markings,
					mark=at position 0.65 with {\arrow[scale=1,>=stealth]{>>}}},
				postaction={decorate}] (B2) -- (C2);
				\draw[thick, line width=0.5pt, decoration={markings,
					mark=at position 0.65 with {\arrow[scale=1,>=stealth]{>>}}},
				postaction={decorate}] (B4) -- (C2);
				\draw[thick, line width=0.5pt, decoration={markings,
					mark=at position 0.65 with {\arrow[scale=1,>=stealth]{>>}}},
				postaction={decorate}] (B5) -- (C2);
				
				\draw[thick, line width=0.5pt, decoration={markings,
					mark=at position 0.6 with {\arrow[scale=1,>=stealth]{>>}}},
				postaction={decorate}] (C3) -- (a1);
				\draw[thick, line width=0.5pt, decoration={markings,
					mark=at position 0.55 with {\arrow[scale=1,>=stealth]{>>}}},
				postaction={decorate}] (C3) -- (A3);
				\draw[thick, line width=0.5pt, decoration={markings,
					mark=at position 0.6 with {\arrow[scale=1,>=stealth]{>>}}},
				postaction={decorate}] (C3) -- (A5);
				\draw[thick, line width=0.5pt, decoration={markings,
					mark=at position 0.75 with {\arrow[scale=1,>=stealth]{>>}}},
				postaction={decorate}] (B3) -- (C3);
				\draw[thick, line width=0.5pt, decoration={markings,
					mark=at position 0.65 with {\arrow[scale=1,>=stealth]{>>}}},
				postaction={decorate}] (B6) -- (C3);
				\draw[thick, line width=0.5pt, decoration={markings,
					mark=at position 0.65 with {\arrow[scale=1,>=stealth]{>>}}},
				postaction={decorate}] (B7) -- (C3);
				
				\draw[thick, line width=0.5pt, decoration={markings,
					mark=at position 0.6 with {\arrow[scale=1,>=stealth]{>>}}},
				postaction={decorate}] (C4) -- (a1);
				\draw[thick, line width=1.25pt, decoration={markings, mark=at position 0.65 with {\arrow[scale=1,>=stealth,color=black]{>>}}},
				postaction={decorate}] (C4) -- (a2);%
				\draw[thick, line width=0.5pt, decoration={markings,
					mark=at position 0.6 with {\arrow[scale=1,>=stealth]{>>}}},
				postaction={decorate}] (C4) -- (A5);
				\draw[thick, line width=0.5pt, decoration={markings,
					mark=at position 0.75 with {\arrow[scale=1,>=stealth]{>>}}},
				postaction={decorate}] (b1) -- (C4);
				\draw[thick, line width=0.5pt, decoration={markings,
					mark=at position 0.65 with {\arrow[scale=1,>=stealth]{>>}}},
				postaction={decorate}] (B7) -- (C4);
				\draw[thick, line width=1.25pt, decoration={markings, mark=at position 0.65 with {\arrow[scale=1,>=stealth,color=black]{>>}}},
				postaction={decorate}] (B8) -- (C4);%
				
				\draw[thick, line width=0.5pt, decoration={markings,
					mark=at position 0.65 with {\arrow[scale=1,>=stealth]{>>}}},
				postaction={decorate}] (C5) -- (a2);
				\draw[thick, line width=0.5pt, decoration={markings,
					mark=at position 0.65 with {\arrow[scale=1,>=stealth]{>>}}},
				postaction={decorate}] (C5) -- (a3);
				\draw[thick, line width=0.5pt, decoration={markings,
					mark=at position 0.65 with {\arrow[scale=1,>=stealth]{>>}}},
				postaction={decorate}] (C5) -- (A6);
				\draw[thick, line width=0.5pt, decoration={markings,
					mark=at position 0.65 with {\arrow[scale=1,>=stealth]{>>}}},
				postaction={decorate}] (b2) -- (C5);
				\draw[thick, line width=0.5pt, decoration={markings,
					mark=at position 0.65 with {\arrow[scale=1,>=stealth]{>>}}},
				postaction={decorate}] (B9) -- (C5);
				\draw[thick, line width=0.5pt, decoration={markings,
					mark=at position 0.65 with {\arrow[scale=1,>=stealth]{>>}}},
				postaction={decorate}] (B10) -- (C5);
				
				\draw[thick, line width=0.5pt, decoration={markings,
					mark=at position 0.65 with {\arrow[scale=1,>=stealth]{>>}}},
				postaction={decorate}] (C6) -- (A1);
				\draw[thick, line width=0.5pt, decoration={markings,
					mark=at position 0.65 with {\arrow[scale=1,>=stealth]{>>}}},
				postaction={decorate}] (C6) -- (a3);
				\draw[thick, line width=0.5pt, decoration={markings,
					mark=at position 0.65 with {\arrow[scale=1,>=stealth]{>>}}},
				postaction={decorate}] (C6) -- (A6);
				\draw[thick, line width=0.5pt, decoration={markings,
					mark=at position 0.65 with {\arrow[scale=1,>=stealth]{>>}}},
				postaction={decorate}] (b3) -- (C6);
				\draw[thick, line width=0.5pt, decoration={markings,
					mark=at position 0.65 with {\arrow[scale=1,>=stealth]{>>}}},
				postaction={decorate}] (B10) -- (C6);
				\draw[thick, line width=0.5pt, decoration={markings,
					mark=at position 0.65 with {\arrow[scale=1,>=stealth]{>>}}},
				postaction={decorate}] (B11) -- (C6);
				
				\draw[thick, line width=0.5pt, decoration={markings,
					mark=at position 0.65 with {\arrow[scale=1,>=stealth]{>>}}},
				postaction={decorate}] (C7) -- (A1);
				\draw[thick, line width=1.25pt, decoration={markings, mark=at position 0.65 with {\arrow[scale=1,>=stealth,color=black]{>>}}},
				postaction={decorate}] (C7) -- (A4);%
				\draw[thick, line width=0.5pt, decoration={markings,
					mark=at position 0.65 with {\arrow[scale=1,>=stealth]{>>}}},
				postaction={decorate}] (C7) -- (A6);
				\draw[thick, line width=0.5pt, decoration={markings,
					mark=at position 0.65 with {\arrow[scale=1,>=stealth]{>>}}},
				postaction={decorate}] (B11) -- (C7);
				\draw[thick, line width=1.25pt, decoration={markings, mark=at position 0.65 with {\arrow[scale=1,>=stealth,color=black]{>>}}},
				postaction={decorate}] (B12) -- (C7);%
				\draw[thick, line width=0.5pt, decoration={markings,
					mark=at position 0.65 with {\arrow[scale=1,>=stealth]{>>}}},
				postaction={decorate}] (B13) -- (C7);
				
				\draw[thick, line width=0.5pt, decoration={markings,
					mark=at position 0.65 with {\arrow[scale=1,>=stealth]{>>}}},
				postaction={decorate}] (C8) -- (A3);
				\draw[thick, line width=0.5pt, decoration={markings,
					mark=at position 0.65 with {\arrow[scale=1,>=stealth]{>>}}},
				postaction={decorate}] (C8) -- (A4);
				\draw[thick, line width=0.5pt, decoration={markings,
					mark=at position 0.65 with {\arrow[scale=1,>=stealth]{>>}}},
				postaction={decorate}] (C8) -- (A5);
				\draw[thick, line width=0.5pt, decoration={markings,
					mark=at position 0.65 with {\arrow[scale=1,>=stealth]{>>}}},
				postaction={decorate}] (B5) -- (C8);
				\draw[thick, line width=0.5pt, decoration={markings,
					mark=at position 0.65 with {\arrow[scale=1,>=stealth]{>>}}},
				postaction={decorate}] (B6) -- (C8);
				\draw[thick, line width=0.5pt, decoration={markings,
					mark=at position 0.65 with {\arrow[scale=1,>=stealth]{>>}}},
				postaction={decorate}] (B14) -- (C8);
				
				\draw[thick, line width=0.5pt, decoration={markings,
					mark=at position 0.65 with {\arrow[scale=1,>=stealth]{>>}}},
				postaction={decorate}] (C9) -- (a2);
				\draw[thick, line width=1.25pt, decoration={markings, mark=at position 0.65 with {\arrow[scale=1,>=stealth,color=black]{>>}}},
				postaction={decorate}] (C9) -- (A5);%
				\draw[thick, line width=0.5pt, decoration={markings,
					mark=at position 0.65 with {\arrow[scale=1,>=stealth]{>>}}},
				postaction={decorate}] (C9) -- (A6);
				\draw[thick, line width=0.5pt, decoration={markings,
					mark=at position 0.65 with {\arrow[scale=1,>=stealth]{>>}}},
				postaction={decorate}] (B8) -- (C9);
				\draw[thick, line width=0.5pt, decoration={markings,
					mark=at position 0.65 with {\arrow[scale=1,>=stealth]{>>}}},
				postaction={decorate}] (B9) -- (C9);
				\draw[thick, line width=1.25pt, decoration={markings, mark=at position 0.65 with {\arrow[scale=1,>=stealth,color=black]{>>}}},
				postaction={decorate}] (B15) -- (C9);%
				
				\draw[thick, line width=0.5pt, decoration={markings,
					mark=at position 0.65 with {\arrow[scale=1,>=stealth]{>>}}},
				postaction={decorate}] (C10) -- (A4);
				\draw[thick, line width=0.5pt, decoration={markings,
					mark=at position 0.65 with {\arrow[scale=1,>=stealth]{>>}}},
				postaction={decorate}] (C10) -- (A5);
				\draw[thick, line width=1.25pt, decoration={markings, mark=at position 0.65 with {\arrow[scale=1,>=stealth,color=black]{>>}}},
				postaction={decorate}] (C10) -- (A6);%
				\draw[thick, line width=1.25pt, decoration={markings, mark=at position 0.65 with {\arrow[scale=1,>=stealth,color=black]{>>}}},postaction={decorate}] (B13) -- (C10);%
				\draw[thick, line width=0.5pt, decoration={markings,
					mark=at position 0.65 with {\arrow[scale=1,>=stealth]{>>}}},
				postaction={decorate}] (B14) -- (C10);
				\draw[thick, line width=0.5pt, decoration={markings,
					mark=at position 0.65 with {\arrow[scale=1,>=stealth]{>>}}},
				postaction={decorate}] (B15) -- (C10);
				
				
				\draw[fill=black] (A1) circle (2.5pt);
				\draw[fill=black] (A2) circle (2.5pt);
				\draw[fill=black] (A3) circle (2.5pt);
				\draw[fill=black] (a1) circle (2.5pt);
				\draw[fill=black] (a2) circle (2.5pt);
				\draw[fill=black] (a3) circle (2.5pt);
				\draw[fill=black] (A4) circle (2.5pt);
				\draw[fill=black] (A5) circle (2.5pt);
				\draw[fill=black] (A6) circle (2.5pt);
				
				\draw[fill=black] (B1) circle (2.5pt);
				\draw[fill=black] (B2) circle (2.5pt);
				\draw[fill=black] (B3) circle (2.5pt);
				\draw[fill=black] (b1) circle (2.5pt);
				\draw[fill=black] (b2) circle (2.5pt);
				\draw[fill=black] (b3) circle (2.5pt);
				\draw[fill=black] (B4) circle (2.5pt);
				\draw[fill=black] (B5) circle (2.5pt);
				\draw[fill=black] (B6) circle (2.5pt);
				\draw[fill=black] (B7) circle (2.5pt);
				\draw[fill=black] (B8) circle (2.5pt);
				\draw[fill=black] (B9) circle (2.5pt);
				\draw[fill=black] (B10) circle (2.5pt);
				\draw[fill=black] (B11) circle (2.5pt);
				\draw[fill=black] (B12) circle (2.5pt);
				\draw[fill=black] (B13) circle (2.5pt);
				\draw[fill=black] (B14) circle (2.5pt);
				\draw[fill=black] (B15) circle (2.5pt);
				
				\draw[fill=black] (C1) circle (2.5pt);
				\draw[fill=black] (C2) circle (2.5pt);
				\draw[fill=black] (C3) circle (2.5pt);
				\draw[fill=black] (C4) circle (2.5pt);
				\draw[fill=black] (C5) circle (2.5pt);
				\draw[fill=black] (C6) circle (2.5pt);
				\draw[fill=black] (C7) circle (2.5pt);
				\draw[fill=black] (C8) circle (2.5pt);
				\draw[fill=black] (C9) circle (2.5pt);
				\draw[fill=black] (C10) circle (2.5pt); 
				
				\node at (30:6.4cm) {$a_2$};
				\node at (90:6.35cm) {$a_1$};
				\node at (150:6.4cm) {$a_3$};
				\node at (210:6.4cm) {$a_2$};
				\node at (270:6.35cm) {$a_1$};
				\node at (330:6.4cm) {$a_3$};
				
				\node at (0:5.55cm) {$b_2$};
				\node at (60:5.55cm) {$b_1$};
				\node at (120:5.6cm) {$b_3$};
				\node at (180:5.55cm) {$b_2$};
				\node at (240:5.55cm) {$b_1$};
				\node at (300:5.6cm) {$b_3$};
				
				\node at (2.88cm,2.02cm) {$a_4$};
				\node at (-2.89cm,1.95cm) {$a_6$};
				\node at (0.28cm,-3.41cm) {$a_5$};
				
				\node at (4.26cm,2.08cm) {$b_4$};
				\node at (-4.3cm,2.15cm) {$b_{10}$}; 
				\node at (3.87cm,-1.16cm) {$b_5$};
				\node at (-3.74cm,-1.1cm) {$b_9$};
				\node at (2.92cm,-2.72cm) {$b_6$};
				\node at (-2.88cm,-2.72cm) {$b_8$};
				\node at (0.3cm,-4.75cm) {$b_7$};
				\node at (-0.76cm,3.85cm) {$b_{11}$}; 
				\node at (0.84cm,3.92cm) {$b_{12}$}; 
				\node at (0.44cm,1.78cm) {$b_{13}$}; 
				\node at (1.85cm,-0.65cm) {$b_{14}$}; 
				\node at (-1.75cm,-0.57cm) {$b_{15}$}; 
				
				\node at (2.37cm,3.32cm) {$c_1$};
				\node at (-2.3cm,3.3cm) {$c_6$};
				\node at (4.7cm,0.53cm) {$c_2$};
				\node at (-4.65cm,0.6cm) {$c_5$};
				\node at (1.63cm,-3.72cm) {$c_3$};
				\node at (-1.7cm,-3.67cm) {$c_4$};
				\node at (0.43cm,2.96cm) {$c_7$}; 
				\node at (2.865cm,-1.06cm) {$c_8$};
				\node at (-2.85cm,-1.1cm) {$c_9$};
				\node at (0.55cm,-0.028cm) {$c_{10}$}; 

		\end{tikzpicture}
		\captionof{figure}{ }
	\end{center}
There are $12$ zigzags up to reversing (one of them is marked with a thick line in Fig. 5) and $\mathcal{P}$ admits the $z$-orientation $\tau_1$ with all faces of type II (shown in Fig. 5).  
A direct verification shows that each edge of $\mathcal{P}$ occurs at most once in any zigzag. 
Therefore, by replacing one of zigzags in $\tau_1$ with its reversed zigzag, we obtain a new $z$-orientation $\tau_2$ where some faces are of type I. 
Note that $\mathcal{P}$ admits a $3$-coloring with the subsets $\{a_i\}, \{b_i\}, \{c_i\}$ corresponding to different colors.
}\end{exmp}

\section{Markov chains of $z$-oriented triangulations}
Consider a $z$-oriented triangulation $(\Gamma,\tau)$ whose vertex set is $V=\{v_1,\dots,v_n\}$. 
We define a Markov chain corresponding to $(\Gamma,\tau)$ by constructing its transition graph $\Gamma_{\tau}$. 

We replace each edge of type I with two oppositely directed edges and each edge of type II with two edges directed in the same way as the original edge. 
The obtained digraph will be denoted by $\Gamma'$. 
Let $\mathrm{outdeg}_{\Gamma'}(v)$ be the outdegree of a vertex $v\in V$ in $\Gamma'$. 
For every edge $vw$ of $\Gamma'$ directed from $v$ to $w$, we assign the number $\frac{1}{\mathrm{outdeg}_{\Gamma'}(v)}$, i.e. the probability of leaving $v$ through $vw$ when all outgoing edges are chosen with equal probability. 

Since each edge $vw$ of type II in $\Gamma$ is replaced by two edges directed from $v$ to $w$ in $\Gamma'$, it is natural to restore the directed edge $vw$ in their place and assign to this edge the sum of their probabilities $\frac{2}{\mathrm{outdeg}_{\Gamma'}(v)}$. 
As a result, we obtain the digraph $\Gamma_{\tau}$ with the vertex set $V$ and the probability $p_{ij}$ assigned to every directed edge $v_iv_j$ such that 
$$p_{ij}=
\begin{cases}
	\frac{1}{\mathrm{outdeg}_{\Gamma'}(v_i)} & \text{if } v_i\text{ and }v_j\text{ form an edge of type I in }\Gamma, \\
	\frac{2}{\mathrm{outdeg}_{\Gamma'}(v_i)} & \text{if } v_i\text{ and }v_j\text{ form an edge of type II in }\Gamma\text{ directed from }v_i\text{ to }v_j, \\
	0 & \text{otherwise}, 
\end{cases}$$
where $i,j\in\{1,\dots,n\}$, see Fig. 6. 
Note that $p_{ij}=0$ if there is no edge joining $v_i$ and $v_j$ or there is the edge of type II directed from $v_j$ to $v_i$. 
\begin{center}
	\begin{tikzpicture}[scale=0.6]
		
		\draw[fill=black] (-1,0) circle (3pt);
		\draw[fill=black] (2,0) circle (3pt);
	
		\draw [thick, line width=1pt, decoration={markings,
			mark=at position 0.62 with {\arrow[scale=1.25,>=stealth]{><}}},
			postaction={decorate}] (-1,0) -- (2,0);
		
		\node at (-1,-0.5) {$v_i$};
		\node at (2,-0.5) {$v_j$};
		
		\draw[fill=black] (5,0) circle (3pt);
		\draw[fill=black] (8,0) circle (3pt);
		
		\draw [thick, line width=1pt, decoration={markings,
			mark=at position 0.56 with {\arrow[scale=1.25,>=stealth]{>}}},
			postaction={decorate}] (5,0) to[bend left=30] (8,0);
		
		\draw [thick, line width=1pt, decoration={markings,
			mark=at position 0.56 with {\arrow[scale=1.25,>=stealth]{<}}},
			postaction={decorate}] (5,0) to[bend right=30] (8,0);
				
		\node at (5,-0.5) {$v_i$};
		\node at (8,-0.5) {$v_j$};
		
		\node at (6.5,1.1) {\small $\frac{1}{\mathrm{outdeg}_{\Gamma'}(v_i)}$};
		\node at (6.5,-1.1) {\small $\frac{1}{\mathrm{outdeg}_{\Gamma'}(v_j)}$};
				
		\draw[fill=black] (11,0) circle (3pt);
		\draw[fill=black] (14,0) circle (3pt);

		\draw [thick, line width=1pt, decoration={markings,
			mark=at position 0.56 with {\arrow[scale=1.25,>=stealth]{>}}},
			postaction={decorate}] (11,0) to[bend left=30] (14,0);

		\draw [thick, line width=1pt, decoration={markings,
			mark=at position 0.56 with {\arrow[scale=1.25,>=stealth]{<}}},
			postaction={decorate}] (11,0) to[bend right=30] (14,0);

		\node at (11,-0.5) {$v_i$};
		\node at (14,-0.5) {$v_j$};
		
		\node at (12.5,1.1) {\small $\frac{1}{\mathrm{outdeg}_{\Gamma'}(v_i)}$};
		\node at (12.5,-1.1) {\small $\frac{1}{\mathrm{outdeg}_{\Gamma'}(v_j)}$};
		
		\draw[fill=black] (-1,-4) circle (3pt);
		\draw[fill=black] (2,-4) circle (3pt);
		
		\draw [thick, line width=1pt, decoration={markings,
			mark=at position 0.62 with {\arrow[scale=1.25,>=stealth]{>>}}},
			postaction={decorate}] (-1,-4) -- (2,-4);
		
		\node at (-1,-4.5) {$v_i$};
		\node at (2,-4.5) {$v_j$};
		
		\draw[fill=black] (5,-4) circle (3pt);
		\draw[fill=black] (8,-4) circle (3pt);

		\draw [thick, line width=1pt, decoration={markings,
			mark=at position 0.56 with {\arrow[scale=1.25,>=stealth]{>}}},
			postaction={decorate}] (5,-4) to[bend left=30] (8,-4);

		\draw [thick, line width=1pt, decoration={markings,
			mark=at position 0.56 with {\arrow[scale=1.25,>=stealth]{>}}},
			postaction={decorate}] (5,-4) to[bend right=30] (8,-4);

		\node at (5,-4.5) {$v_i$};
		\node at (8,-4.5) {$v_j$};

		\node at (6.5,-2.9) {\small $\frac{1}{\mathrm{outdeg}_{\Gamma'}(v_i)}$};
		\node at (6.5,-5.1) {\small $\frac{1}{\mathrm{outdeg}_{\Gamma'}(v_i)}$};
		
		\draw[fill=black] (11,-4) circle (3pt);
		\draw[fill=black] (14,-4) circle (3pt);
		
		\draw [thick, line width=1pt, decoration={markings,
			mark=at position 0.56 with {\arrow[scale=1.25,>=stealth]{>}}},
			postaction={decorate}] (11,-4) -- (14,-4);
			
		\node at (12.5,-3.4) {\small $\frac{2}{\mathrm{outdeg}_{\Gamma'}(v_i)}$};
		
		\node at (11,-4.5) {$v_i$};
		\node at (14,-4.5) {$v_j$};
		
		\node at (0.5,-6.5) {$\Gamma$};
		\node at (6.5,-6.5) {$\Gamma'$};
		\node at (12.5,-6.5) {$\Gamma_{\tau}$};

		\draw[->] (2.75,0) -- (4.25,0);
		\draw[->] (2.75,-4) -- (4.25,-4);
		
		\draw[->] (8.75,0) -- (10.25,0);
		\draw[->] (8.75,-4) -- (10.25,-4);
		
	\end{tikzpicture}
	\captionof{figure}{ }
\end{center}
If all faces of $(\Gamma,\tau)$ are of type II, then $\Gamma_{\tau}=\Gamma_{II}$ is a directed Eulerian triangulation of $M$. 

We denote by $\mathcal{X}_{\tau}=\{X_i\}_{i\in\mathbb{N}}$ the time-homogeneous Markov chain of $(\Gamma,\tau)$, i.e. the Markov chain whose state space is $V=\{v_1,\dots,v_n\}$, the transition graph is $\Gamma_{\tau}$ and the transition matrix is $[p_{ij}]$. 
Recall that the length of a walk is the number of edges it passes through, counting repeated edges as many times as they occur in the walk. 
\begin{prop}\label{prop1}
	The following assertions are fulfilled: 
	\begin{enumerate}
		\item[$(1)$] For every vertex of $\Gamma_{\tau}$ there exists a closed walk of length $3$ passing through this vertex.  
		\item[$(2)$] $\mathcal{X}_{\tau}$ is irreducible. 
	\end{enumerate}
\end{prop}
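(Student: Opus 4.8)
The plan is to reduce both assertions to a single structural observation: in $\Gamma_{\tau}$, the boundary of every face of $(\Gamma,\tau)$ can be traversed as a directed closed walk of length $3$. For a face of type II this is immediate from the construction, since all three of its edges are of type II and already form a directed cycle, which is reproduced verbatim in $\Gamma_{\tau}$ (as three directed edges, one per boundary edge). For a face of type I, two of its edges are of type I and one is of type II; by construction each type I edge of $\Gamma$ is present in $\Gamma_{\tau}$ as a pair of oppositely directed edges, hence can be traversed in either direction, while the type II edge carries a prescribed direction. Exactly one of the two cyclic orientations of the triangular boundary traverses the type II edge in its prescribed direction, and in that orientation the two type I edges are each used in an admissible direction; thus the boundary again yields a directed closed walk of length $3$ in $\Gamma_{\tau}$. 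This step uses only the classification of faces recalled before the statement (namely \cite[Proposition 1]{T1}).

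Granting this observation, part $(1)$ is immediate: every vertex $v$ of $\Gamma$ is incident to a face $F$, and the directed closed walk around the boundary of $F$ has length $3$ and passes through $v$ (in fact it is the triangular boundary of $F$).

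For part $(2)$, recall that $\mathcal{X}_{\tau}$ is irreducible if and only if its transition graph $\Gamma_{\tau}$ is strongly connected, i.e. any two vertices are joined by a directed walk. Given vertices $v$ and $w$, choose a path $v=u_0,u_1,\dots,u_k=w$ in $\Gamma$, which exists because $\Gamma$ is connected. For each $i$, the edge $u_{i-1}u_i$ lies on the boundary of some face $F_i$, and the directed closed walk around the boundary of $F_i$ supplied by the observation contains a directed sub-walk from $u_{i-1}$ to $u_i$ (of length $1$ or $2$). Concatenating these sub-walks gives a directed walk from $v$ to $w$ in $\Gamma_{\tau}$. Hence $\Gamma_{\tau}$ is strongly connected and $\mathcal{X}_{\tau}$ is irreducible.

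I expect no serious obstacle here; the only point requiring care is the orientation argument for type I faces, and the rest is bookkeeping. Phrasing everything through the boundary-traversal property is what makes the case analysis uniform: one never has to separate, for instance, the case where the type II edge of a type I face is incident to the vertex under consideration from the case where it is not, nor to distinguish which of the two faces carrying a given edge one picks.
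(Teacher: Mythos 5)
Your proposal is correct and follows essentially the same route as the paper: both arguments show that each face boundary yields a directed closed walk of length $3$ in $\Gamma_{\tau}$ (using the type I/II classification of faces and the fact that type I edges are doubled in both directions), deduce $(1)$ from incidence of every vertex with a face, and deduce $(2)$ by combining connectivity of $\Gamma$ with directed sub-walks inside the faces along a path from $v$ to $w$.
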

\begin{proof} 
	$(1)$. Let $v_1\in V$ and consider any face $F$ of $\Gamma$ containing $v_1$. 
	Suppose that $v_2, v_3$ are the other vertices of $F$. 
	Since faces of both types contain an edge of type II, we assume that there is an edge of type II directed from $v_1$ to $v_2$ (see Fig. 7). 
	If this assumption does not hold, then one of the other edges is of type II and the proof is similar. 
		\begin{center}
		\begin{tikzpicture}[scale=0.6]
			
			\draw[fill=black] (-1,2) circle (3pt);
			\draw[fill=black] (-2.7320508076,-1) circle (3pt);
			\draw[fill=black] (0.7320508076,-1) circle (3pt);
			
			\draw [thick, line width=1pt, decoration={markings,
				mark=at position 0.62 with {\arrow[scale=1.25,>=stealth]{><}}},
			postaction={decorate}] (-1,2) -- (-2.7320508076,-1);
			
			\draw [thick, line width=1pt, decoration={markings,
				mark=at position 0.62 with {\arrow[scale=1.25,>=stealth]{>>}}},
			postaction={decorate}] (-2.7320508076,-1) -- (0.7320508076,-1);
			
			\draw [thick, line width=1pt, decoration={markings,
				mark=at position 0.62 with {\arrow[scale=1.25,>=stealth]{><}}},
			postaction={decorate}] (-1,2) -- (0.7320508076,-1);
			
			\node at (-1,0) {$F$};
			
			\node at (-2.7320508076,-1.4) {$v_1$};
			\node at (0.7320508076,-1.4) {$v_2$};
			\node at (-1,2.4) {$v_3$};
			
			\draw[fill=black] (6.1961524228,2) circle (3pt);
			\draw[fill=black] (4.4641016152,-1) circle (3pt);
			\draw[fill=black] (7.9282032304,-1) circle (3pt);
			
			\draw [thick, line width=1pt, decoration={markings,
				mark=at position 0.56 with {\arrow[scale=1.25,>=stealth]{>}}},
			postaction={decorate}] (6.1961524228,2) to[bend left=20] (4.4641016152,-1);
			
			\draw [thick, line width=1pt, decoration={markings,
				mark=at position 0.56 with {\arrow[scale=1.25,>=stealth]{>}}},
			postaction={decorate}] (4.4641016152,-1) to[bend left=20] (6.1961524228,2);
			
			\draw [thick, line width=1pt, decoration={markings,
				mark=at position 0.56 with {\arrow[scale=1.25,>=stealth]{>}}},
			postaction={decorate}] (4.4641016152,-1) -- (7.9282032304,-1);
			
			\draw [thick, line width=1pt, decoration={markings,
				mark=at position 0.56 with {\arrow[scale=1.25,>=stealth]{>}}},
			postaction={decorate}] (7.9282032304,-1) to[bend right=20] (6.1961524228,2);
			
			\draw [thick, line width=1pt, decoration={markings,
				mark=at position 0.56 with {\arrow[scale=1.25,>=stealth]{>}}},
			postaction={decorate}] (6.1961524228,2) to[bend right=20] (7.9282032304,-1);
			
			\draw[->] (1.4820508076,0.5) -- (3.7141016152,0.5);
			
			\node at (4.4641016152,-1.4) {$v_1$};
			\node at (7.9282032304,-1.4) {$v_2$};
			\node at (6.1961524228,2.4) {$v_3$};
			
			\draw[fill=black] (-1,-3) circle (3pt);
			\draw[fill=black] (-2.7320508076,-6) circle (3pt);
			\draw[fill=black] (0.7320508076,-6) circle (3pt);
			
			\draw [thick, line width=1pt, decoration={markings,
				mark=at position 0.62 with {\arrow[scale=1.25,>=stealth]{>>}}},
			postaction={decorate}] (-1,-3) -- (-2.7320508076,-6);
			
			\draw [thick, line width=1pt, decoration={markings,
				mark=at position 0.62 with {\arrow[scale=1.25,>=stealth]{>>}}},
			postaction={decorate}] (-2.7320508076,-6) -- (0.7320508076,-6);
			
			\draw [thick, line width=1pt, decoration={markings,
				mark=at position 0.62 with {\arrow[scale=1.25,>=stealth]{<<}}},
			postaction={decorate}] (-1,-3) -- (0.7320508076,-6);
			
			\node at (-1,-5) {$F$};
			
			\node at (-1,-7.5) {$\Gamma$};
			
			\node at (-2.7320508076,-6.4) {$v_1$};
			\node at (0.7320508076,-6.4) {$v_2$};
			\node at (-1,-2.6) {$v_3$};
			
			\draw[fill=black] (6.1961524228,-3) circle (3pt);
			\draw[fill=black] (4.4641016152,-6) circle (3pt);
			\draw[fill=black] (7.9282032304,-6) circle (3pt);
			
			\draw [thick, line width=1pt, decoration={markings,
				mark=at position 0.56 with {\arrow[scale=1.25,>=stealth]{>}}},
			postaction={decorate}] (6.1961524228,-3) -- (4.4641016152,-6);
			
			\draw [thick, line width=1pt, decoration={markings,
				mark=at position 0.56 with {\arrow[scale=1.25,>=stealth]{>}}},
			postaction={decorate}] (4.4641016152,-6) -- (7.9282032304,-6);
			
			\draw [thick, line width=1pt, decoration={markings,
				mark=at position 0.56 with {\arrow[scale=1.25,>=stealth]{<}}},
			postaction={decorate}] (6.1961524228,-3) -- (7.9282032304,-6);
			
			\node at (6.1961524228,-7.5) {$\Gamma_{\tau}$};
			
			\draw[->] (1.4820508076,-4.5) -- (3.7141016152,-4.5);
			
			\node at (4.4641016152,-6.4) {$v_1$};
			\node at (7.9282032304,-6.4) {$v_2$};
			\node at (6.1961524228,-2.6) {$v_3$};
			
		\end{tikzpicture}
		\captionof{figure}{ }
	\end{center}
	If $F$ is of type I, then the other edges are of type I and $F$ corresponds to a subgraph of $\Gamma_{\tau}$ whose directed edges are $v_1v_2, v_2v_3, v_3v_2, v_3v_1, v_1v_3$. 
	If $F$ is of type II, then the edges of the corresponding subgraph of $\Gamma_{\tau}$ are $v_1v_2,v_2v_3,v_3v_1$.  
	For both types of $F$, $\Gamma_{\tau}$ contains the directed cycle $v_1v_2,v_2v_3,v_3v_1$ of length $3$. 
	
	$(2)$. By the statement (1), for every distinct vertices $v,w\in V$ that belong to the same face in $\Gamma$ there exists a walk from $v$ to $w$ in $\Gamma_{\tau}$. 
	Thus, by the connectivity of $\Gamma$, it follows that any two vertices of $\Gamma_{\tau}$ are connected by a walk (i.e. any two states of $\mathcal{X}_{\tau}$ communicate), which completes the proof of $(2)$. 
\end{proof}
Denote by $o(v)$ the {\it period} of a state $v\in V$, i.e. the greatest common divisor of the lengths of all closed walks in $\Gamma_{\tau}$ starting at $v$. 
Recall that if all states of the Markov chain have period $k$, then the Markov chain is called {\it periodic} with period $k$. 
If $o(v)=1$, then $v$ is {\it aperiodic}. 
If all states are aperiodic, then the Markov chain is also called {\it aperiodic}. 
All states of an irreducible Markov chain have the same period. 
\begin{prop}\label{prop2}
	The following statements are equivalent:
	\begin{enumerate}
		\item[$(1)$] $\mathcal{X}_{\tau}$ is aperiodic. 
		\item[$(2)$] $\mathcal{X}_{\tau}$ is ergodic. 
		\item[$(3)$] There exists a closed walk in $\Gamma_{\tau}$ whose length is not divisible by $3$. 
	\end{enumerate}
\end{prop}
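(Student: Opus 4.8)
The plan is to reduce the statement to two elementary facts about finite Markov chains together with Proposition~\ref{prop1}. First, the state space $V$ is finite and, by Proposition~\ref{prop1}(2), the chain $\mathcal{X}_{\tau}$ is irreducible; a finite irreducible Markov chain is automatically positive recurrent, so for such a chain \emph{ergodic} is equivalent to \emph{aperiodic}. This already yields $(1)\Leftrightarrow(2)$, and it also justifies speaking of \emph{the} period $o$ of $\mathcal{X}_{\tau}$, since all states of an irreducible chain share a common period (as recalled just before the proposition).

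The heart of the argument is $(1)\Leftrightarrow(3)$, which I would carry out inside the transition digraph $\Gamma_{\tau}$. Since $\Gamma_{\tau}$ is strongly connected (again Proposition~\ref{prop1}(2)), the common period $o$ equals the greatest common divisor of the lengths of \emph{all} closed walks in $\Gamma_{\tau}$, not merely of those through one fixed vertex: given a closed walk $W$ of length $\ell$ based at a vertex $u$ and another vertex $v$, pick walks $v\to u$ and $u\to v$ of lengths $a$ and $b$; then $a+b$ and $a+\ell+b$ are both lengths of closed walks at $v$, so $o(v)$ divides their difference $\ell$. The reverse divisibility is immediate because closed walks at $v$ are in particular closed walks of $\Gamma_{\tau}$, so $o(v)$ coincides with the graph-theoretic gcd for every $v$.

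Then I would invoke Proposition~\ref{prop1}(1): through every vertex there is a closed walk of length $3$, hence $o$ divides $3$ and therefore $o\in\{1,3\}$. Consequently $o=3$ precisely when every closed walk in $\Gamma_{\tau}$ has length divisible by $3$, i.e. precisely when $(3)$ fails; equivalently, $(3)$ holds if and only if $o=1$, which by the first paragraph is exactly $(1)$. Chaining the equivalences gives $(1)\Leftrightarrow(2)\Leftrightarrow(3)$.

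I do not anticipate a real obstacle: the only points needing care are the standard identification of the per-state period with the period of the strongly connected digraph $\Gamma_{\tau}$, and the equally standard fact that for finite irreducible chains aperiodicity and ergodicity coincide; both become routine once Proposition~\ref{prop1} is available, and the decisive quantitative input — that the period divides $3$ — is supplied directly by the length-$3$ closed walks furnished by Proposition~\ref{prop1}(1).
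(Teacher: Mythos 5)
Your proposal is correct and follows essentially the same route as the paper: the equivalence $(1)\Leftrightarrow(2)$ comes from irreducibility (Proposition~\ref{prop1}(2)), and $(1)\Leftrightarrow(3)$ comes from combining the length-$3$ closed walks of Proposition~\ref{prop1}(1) with a gcd argument on periods. The only cosmetic difference is that you first identify the common period with the gcd of the lengths of all closed walks in $\Gamma_{\tau}$ via a detour argument, whereas the paper works with $o(v)$ for a vertex $v$ chosen on the given walk; both are routine and lead to the same conclusion.
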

\begin{proof}
	The equivalence of $(1)$ and $(2)$ follows from the second part of Proposition \ref{prop1}. 
	Now, we establish the equivalence of $(1)$ and $(3)$. 
	Assume that $\mathcal{X}_{\tau}$ is aperiodic and let $v\in V$. 
	By the statement (1) of Proposition \ref{prop1}, there exists a closed walk of length $3$ in $\Gamma_{\tau}$ passing through $v$. 
	Since $o(v)=1$, there exists another closed walk passing through $v$ whose length is not divisible by $3$. 
	Conversely, suppose that $\Gamma_{\tau}$ contains a closed walk whose length is not divisible by $3$. 
	If $v$ is a vertex of this walk, then the first statement of Proposition \ref{prop1} implies that $o(v)=1$ and the statement $(2)$ of the same proposition guarantees that $\mathcal{X}_{\tau}$ is aperiodic.
\end{proof}
Proposition \ref{prop2} implies that if $\mathcal{X}_{\tau}$ is not ergodic, then it is periodic with period $3$. 

\begin{prop}\label{lem1}
	If there exists a face of type I in $(\Gamma,\tau)$, then $\mathcal{X}_{\tau}$ is ergodic. 
\end{prop}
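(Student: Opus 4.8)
The strategy is to reduce everything to the combinatorial criterion in Proposition~\ref{prop2}: $\mathcal{X}_{\tau}$ is ergodic if and only if $\Gamma_{\tau}$ admits a closed walk whose length is not divisible by $3$. So the whole task becomes producing one short closed walk of the right length.

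First I would invoke the classification of faces recalled before Figure~1: a face of type I has, by definition, two of its edges of type I (the third being of type II). Pick a face $F$ of type I in $(\Gamma,\tau)$ and let $e$ be one of its two edges of type I, say joining the vertices $v$ and $w$. Now recall how $\Gamma_{\tau}$ is constructed (Section~3, and Figure~6): an edge of type I is replaced by \emph{two oppositely directed} edges, one from $v$ to $w$ and one from $w$ to $v$. Hence $p_{vw}=\tfrac{1}{\mathrm{outdeg}_{\Gamma'}(v)}>0$ and $p_{wv}=\tfrac{1}{\mathrm{outdeg}_{\Gamma'}(w)}>0$, so $(v,w,v)$ is a closed walk in $\Gamma_{\tau}$, based at $v$, of length $2$.

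Since $2$ is not divisible by $3$, Proposition~\ref{prop2} (the equivalence of statements $(2)$ and $(3)$) immediately yields that $\mathcal{X}_{\tau}$ is ergodic, which is the desired assertion.

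There is essentially no obstacle in this particular proposition: the only point is the observation that every edge of type I produces a digon (a $2$-cycle) in $\Gamma_{\tau}$, together with $2\not\equiv 0\pmod 3$; the substantive groundwork has already been done in Propositions~\ref{prop1} and~\ref{prop2}. By contrast, the remaining directions of the main characterization — showing that non-$3$-colorability forces ergodicity even without a type I face, and that $3$-colorability together with all faces of type II forces non-ergodicity — will require analyzing lengths of closed walks in $\Gamma_{II}$ modulo $3$, which is where the coloring hypothesis must enter; but none of that is needed here.
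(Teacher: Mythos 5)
Your proof is correct and takes essentially the same route as the paper: both reduce the statement to Proposition~\ref{prop2} and exhibit an explicit closed walk in $\Gamma_{\tau}$ of length not divisible by $3$, exploiting the fact that a type I edge yields two oppositely directed edges in the transition graph. The only difference is cosmetic — the paper uses the length-$5$ walk $v_1v_2,v_2v_3,v_3v_2,v_2v_3,v_3v_1$ inside the type I face, whereas you use the even shorter length-$2$ digon on a single type I edge, which works just as well since $2\not\equiv 0\pmod 3$.
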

\begin{proof}
	Let $F$ be a face of type I in $\Gamma$ with the vertices $v_1, v_2, v_3$ and the edge of type II directed from $v_1$ to $v_2$ (as in the first part of the proof of Proposition \ref{prop1}). 
	Since $\Gamma_{\tau}$ contains the closed walk
	$$v_1v_2, v_2v_3, v_3v_2, v_2v_3, v_3v_1$$
	of length $5$, Proposition \ref{prop2} implies that $\mathcal{X}_{\tau}$ is ergodic. 
\end{proof}

\section{Main result} 
Our main result is the following. 
\begin{theorem}\label{th1} 
	Let $(\Gamma,\tau)$ be a $z$-oriented triangulation of a surface $M$. 
	The following assertions are fulfilled:
	\begin{enumerate}
		\item[(1)] The Markov chain $\mathcal{X}_{\tau}$ is not ergodic if and only if $(\Gamma,\tau)$ is $3$-colorable and all its faces are of type II. 
		\item[(2)] In the case when $M$ is the sphere or the real projective plane, $\mathcal{X}_{\tau}$ is not ergodic if and only if all faces of $(\Gamma,\tau)$ are of type II. 
	\end{enumerate}
\end{theorem}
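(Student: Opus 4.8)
The plan is to obtain both parts from Propositions~\ref{prop2} and~\ref{lem1}, using a proper $3$-coloring as a ``potential modulo~$3$'' on the transition graph.

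For the ``only if'' direction of part~(1), I would start from the assumption that $\mathcal{X}_\tau$ is not ergodic. By Proposition~\ref{lem1} (contrapositive), $(\Gamma,\tau)$ then has no face of type~I, so all faces are of type~II and $\Gamma_\tau=\Gamma_{II}$ is a directed Eulerian triangulation whose underlying graph is $\Gamma$. By Proposition~\ref{prop2} every closed walk in $\Gamma_\tau$ has length divisible by~$3$. Using that $\Gamma_\tau$ is strongly connected (Proposition~\ref{prop1}(2)), fix $v_0\in V$ and set $c(v)\in\mathbb{Z}_3$ to be the length modulo~$3$ of any directed walk from $v_0$ to $v$; concatenating two such walks with a return walk $v\to v_0$ and invoking the divisibility hypothesis shows $c$ is well defined. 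Since traversing a directed edge $v\to w$ adds $1$ to the length, $c(w)=c(v)+1\neq c(v)$, so $c$ is a proper $3$-coloring of $\Gamma$, which is exactly what is needed.

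For the ``if'' direction of part~(1), assume $(\Gamma,\tau)$ has a proper $3$-coloring $c\colon V\to\mathbb{Z}_3$ and all faces of type~II, so $\Gamma_\tau=\Gamma_{II}$ and every face is a directed $3$-cycle $u\to v\to w\to u$ with $\{c(u),c(v),c(w)\}=\mathbb{Z}_3$. The three consecutive color differences around such a cycle each equal $1$ or $2$ modulo~$3$ and sum to $0$, hence are all $1$ or all $2$; call the face \emph{positive} or \emph{negative} accordingly. The key point is that, by the definition of a directed Eulerian embedding, the two faces incident to an oriented edge $e$ traverse $e$ in the \emph{same} direction, so the sign of either of them is determined by the single value $c(w)-c(v)$ attached to $e$; thus adjacent faces share a sign, and connectedness of $M$ (equivalently, of the dual graph of $\Gamma$) makes every face positive, say. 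Then $c(w)=c(v)+1\pmod 3$ along every directed edge $v\to w$, so every closed walk in $\Gamma_\tau$ has length $\equiv 0\pmod 3$, and Proposition~\ref{prop2} yields that $\mathcal{X}_\tau$ is not ergodic.

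Part~(2) is then formal: by part~(1), $\mathcal{X}_\tau$ is not ergodic exactly when $(\Gamma,\tau)$ is $3$-colorable with all faces of type~II, which certainly forces all faces to be of type~II; conversely, if all faces are of type~II then $\Gamma_\tau=\Gamma_{II}$ is an Eulerian triangulation of $M$, which is $3$-colorable when $M$ is the sphere or the real projective plane by the results of Fisk~\cite{Fisk}, Mohar~\cite{MoharProj} and Tsai--West~\cite{TsaiWest}, so part~(1) applies. The step I expect to be the main obstacle is the ``same sign for adjacent faces'' argument: it rests on the fact that in a directed $2$-cell embedding the two faces along an oriented edge run with, rather than against, that edge, which is precisely what makes the color-difference data consistent and lets the local signs globalize; the potential-function construction and the deduction of part~(2) should be comparatively routine.
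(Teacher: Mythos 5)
Your proof of part (1) is correct, and for the harder (``only if'') direction it takes a genuinely different and more elementary route than the paper. The paper cuts $M$ along the edges not in a spanning tree of the dual, doubles the resulting fundamental polygon to obtain a directed Eulerian multi-triangulation $\Gamma_0$ of the sphere, invokes Theorem \ref{ET3} (Tsai--West) to $3$-color $\Gamma_0$, and then uses Lemma \ref{lem2}(1) to show that the coloring descends to $\Gamma$. You instead build the coloring directly: once non-ergodicity forces all faces (hence all edges) to be of type II via Proposition \ref{lem1}, and Proposition \ref{prop2} forces every closed walk of the strongly connected digraph $\Gamma_\tau$ (Proposition \ref{prop1}(2)) to have length divisible by $3$, the walk-length potential $c(v)\in\mathbb{Z}_3$ is well defined and increases by $1$ along every arc, so it properly $3$-colors $\Gamma$. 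This is the standard cyclic-class decomposition of a period-$3$ chain and avoids the topological doubling and Tsai--West entirely for part (1). Your ``if'' direction (the increment $c(w)-c(v)\in\{1,2\}$ is constant on each directed face, is shared across the common directed edge of adjacent faces, hence is globally constant by connectedness of the dual) is a correct variant of the paper's Lemma \ref{lem2}, which organizes the same computation around vertices instead of faces.

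Part (2), however, has a genuine gap in the projective-plane case. You claim that if all faces are of type II then $\Gamma_{II}$ is an Eulerian triangulation of $M$ which is ``$3$-colorable \dots by the results of Fisk and Mohar''; but those results do not say this, and the statement is false in general: Eulerian triangulations of the real projective plane need not be $3$-colorable, which is exactly why Theorem \ref{THmohar} is a criterion rather than an unconditional statement --- it only guarantees a color factor $\mathcal{C}$ and asserts that $G$ is $3$-colorable if and only if $G-\mathcal{C}$ is bipartite. The missing step, which is the substance of the paper's proof of (2), is to verify this bipartiteness using the $z$-orientation: since every face is a directed $3$-cycle and incoming and outgoing edges alternate around each vertex, for every vertex $v\notin\mathcal{C}$ the vertices of $\mathcal{C}$ adjacent to $v$ are exactly its in-neighbours or exactly its out-neighbours; hence in $\Gamma-\mathcal{C}$ every vertex is a source or a sink, every edge joins a source to a sink, and $\Gamma-\mathcal{C}$ is bipartite, so Theorem \ref{THmohar} applies and part (1) finishes the argument. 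Without this use of the directed structure your deduction does not go through (the sphere case is fine, since Theorem \ref{ET3} does give unconditional $3$-colorability there).
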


\begin{exmp}\label{ex4}{\rm 
		The Markov chain of $\mathcal{O}$ from Example \ref{ex1} is ergodic for $\tau_1$ and $\tau_3$ since for these $z$-orientations $\mathcal{O}$ contains faces of type I. 
		In contrast, all faces of $(\mathcal{O},\tau_2)$ are of type II and the corresponding Markov chain is not ergodic. 
		The Markov chain of $\mathcal{T}_{k,m}$ (where $k,m\geq 3$) from Example \ref{ex2} is
		\begin{enumerate} 
			\item[$\bullet$] ergodic for $(\mathcal{T}_{k,m},\tau_1)$; 
			\item[$\bullet$] ergodic for $(\mathcal{T}_{k,m},\tau_2)$ if at least one of the numbers $k,m$ is not divisible by $3$ (i.e. when $\mathcal{T}_{k,m}$ is not $3$-colorable); 
			\item[$\bullet$] not ergodic for $(\mathcal{T}_{k,m},\tau_2)$ if both $k,m$ are divisible by $3$ (i.e. when $\mathcal{T}_{k,m}$ is $3$-colorable). 
		\end{enumerate} 
		For $\mathcal{P}$ from Example \ref{ex3} the corresponding Markov chain is 
		not ergodic for $\tau_1$ ($\mathcal{P}$ is $3$-colorable, see Fig. 5) and ergodic for $\tau_2$.  
}\end{exmp} 

It follows from the first part of Theorem \ref{th1} that precisely one of the following possibilities is realized for every $z$-oriented triangulation $(\Gamma,\tau)$ of $M$:
\begin{enumerate}
	\item[(A)] $(\Gamma,\tau)$ contains a face of type I and its Markov chain is ergodic.  
	\item[(B)] $(\Gamma,\tau)$ is $3$-colorable and all its faces are of type II; in this case, its  Markov chain is not ergodic.
	\item[(C)] $(\Gamma,\tau)$ is not $3$-colorable and all its faces are of type II which implies that its  Markov chain is ergodic.
\end{enumerate} 
Thus, by the second part of Theorem \ref{th1}, the case (C) does not occur when $M$ is the sphere. 
In Sections 6, we show that each of the above possibilities is realized for any orientable surface different from the sphere. 
In other words, the second part of Theorem \ref{th1} fails for each orientable surface different from the sphere. 
	
The second part of Theorem \ref{th1} shows that (C) is not realized for the real projective plane.  
Examples from Section 6 show that (A) and (B) are realized for every non-orientable surface. 
The realization of (C) on non-orientable surfaces different from the real projective plane is an open problem. 

\section{Proof of Theorem \ref{th1}} 
By \cite{TsaiWest}, an {\it Eulerian near-triangulation} is a multigraph embedded in the sphere in which every vertex has even degree and at most one face has the boundary that is not a $3$-cycle. 
To prove Theorem \ref{th1} we need the following.
\begin{theorem}[\cite{TsaiWest}]\label{ET3}
	Every Eulerian near-triangulation is $3$-colorable. 
\end{theorem}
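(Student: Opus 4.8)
The plan is to argue by induction on the number of vertices, reducing an Eulerian near-triangulation to a smaller one by deleting a low-degree vertex together with a carefully chosen incident edge, $3$-coloring the smaller graph by induction, and extending the coloring. The reductions are driven by a degree count: if $G$ is an Eulerian near-triangulation with $V$ vertices, $E$ edges, $F$ faces, and the (at most one) exceptional face has boundary of length $\ell\ge 3$, then $2E=3(F-1)+\ell$ together with Euler's formula gives $E=3V-3-\ell$, so $\sum_v\deg_G(v)=2E=6V-6-2\ell<6V$; hence some vertex has degree at most $5$ and, all degrees being even, some vertex $v$ has degree $2$ or $4$. The base cases ($V\le 3$) are immediate.

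The case $\deg_G(v)=2$ is routine: with $N(v)=\{a,b\}$, at least one face at $v$ is a triangle $vab$, so $ab\in E(G)$; deleting $v$ and one copy of $ab$ lowers $\deg(a),\deg(b)$ by $2$ each, leaves the other degrees untouched, merges the incident faces so that still at most one face is non-triangular, and keeps the graph connected (for $V\ge 4$), so the result is again an Eulerian near-triangulation, and any $3$-coloring of it extends to $v$, which has at most two forbidden colors. The case $\deg_G(v)=4$ is the core. Taking $v$ not incident to the exceptional face (a low-degree vertex of this kind can be assumed to exist, otherwise the earlier cases apply after a short argument near the exceptional face), let $N(v)=\{a,b,c,d\}$ in rotational order, with incident triangles $vab,vbc,vcd,vda$. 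Planarity (a Jordan-curve argument using an edge $ac$ and the path $a,v,c$) forces, say, $ac\notin E(G)$. Form $G'$ by deleting $v$, identifying $a$ with $c$ into a vertex $w$, and deleting a redundant copy of each pair of parallel edges created by the identification (in particular one copy each of $wb$ and $wd$). A short check shows every degree of $G'$ is even, at most one face of $G'$ is non-triangular, and $|V(G')|<|V(G)|$, so $G'$ is an Eulerian near-triangulation. By induction $G'$ has a proper $3$-coloring; pulled back to $G$ it colors $a,c$ with the color of $w$, and properness on all edges not at $v$ holds because each edge of $G$ at $a$ or $c$ either survives in $G'$ as an edge at $w$ or has a parallel partner that does. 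Then $v$ sees the colors $\{\mathrm{col}(w),\mathrm{col}(b),\mathrm{col}(d)\}$ with $\mathrm{col}(b),\mathrm{col}(d)\ne\mathrm{col}(w)$, so $v$ can be colored exactly when $\mathrm{col}(b)=\mathrm{col}(d)$.

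The remaining difficulty — and the main obstacle of the whole proof — is the subcase $\mathrm{col}(b)\ne\mathrm{col}(d)$. Here one examines the subgraph of $G'$ spanned by the two colors $\mathrm{col}(b),\mathrm{col}(d)$: if $b,d$ lie in different components, interchanging these two colors on $b$'s component yields a proper $3$-coloring of $G'$ with $\mathrm{col}(b)=\mathrm{col}(d)$, and we finish as above; if $b,d$ lie in one component, a connecting path $P$ in that subgraph together with $wb,wd$ forms a cycle in $G'$ which, read back in $G$, is a Jordan curve separating $a$ from $c$ across the arc $a,v,c$. One then shows that the mirror-image reduction at the other diagonal — deleting $v$ and identifying $b$ with $d$ — must succeed: in its $3$-coloring an analogous obstructing chain joining $a$ to $c$ would have to cross the curve just found, which is impossible in the plane, so there $\mathrm{col}(a)=\mathrm{col}(c)$ can be arranged and $v$ receives its color. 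Making this Kempe-chain dichotomy rigorous, so that the two color patterns really are incompatible across the separating curve, is the delicate point; everything else is bookkeeping about degrees, faces, and connectivity, with the single exceptional face participating only to the extent of confirming the reduced graphs stay near-triangulations.

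Finally, when $G$ is a genuine triangulation ($\ell=3$) there is a slicker route fitting the spirit of this paper: since $G$ is Eulerian the dual $G^\ast$ is bipartite, so the faces of $G$ admit a proper $2$-coloring; orienting each face clockwise or counterclockwise according to its class gives a coherent orientation in which every face is a directed $3$-cycle, and assigning colors in $\mathbb{Z}_3$ so that each arc raises the value by $1$ is consistent around every cycle (it is around each facial triangle, and facial triangles generate the cycle space), yielding a proper $3$-coloring. This shortcut breaks down as soon as the exceptional face has length not divisible by $3$, which is exactly why the induction above is needed in full generality.
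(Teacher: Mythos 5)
The paper does not actually prove this statement---it is imported verbatim from \cite{TsaiWest}---so your attempt has to stand on its own, and it does not: the step you yourself call ``the delicate point'' is a genuine gap, not a formality. In the degree-$4$ case, after deleting $v$ and identifying $a$ with $c$, suppose the coloring of $G'$ has $\mathrm{col}(b)\neq\mathrm{col}(d)$ and the $(\mathrm{col}(b),\mathrm{col}(d))$-chain joins $b$ to $d$. Your rescue is the mirror reduction identifying $b$ with $d$. First, that reduction requires $bd\notin E(G)$, and planarity only gives you that \emph{at least one} diagonal is absent, namely the one you already used. Second, and more fundamentally, the obstructing chain for the mirror reduction lives in a $3$-coloring $\phi''$ of a \emph{different} graph $G''$, while the separating cycle $C$ (your path $P$ plus $b,v,d$) was built from a coloring $\phi'$ of $G'$. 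A path from $a$ to $c$ must indeed meet $C$, but it meets it at a vertex, and nothing constrains $\phi''$ on the vertices of $C$: they may well carry the two chain colors of $\phi''$ even though under $\phi'$ they carried $\mathrm{col}(b)$ and $\mathrm{col}(d)$. No contradiction results; Kempe chains cannot be compared across two colorings of two different graphs, which is exactly the failure mode of Kempe's four-color argument. So the central subcase of your induction is unproved (and it is conceivable that both reductions are simultaneously obstructed). There are also secondary gaps: the existence of a vertex of degree $2$ or $4$ \emph{not} incident to the exceptional face is asserted, not proved (all low-degree vertices can lie on that face), and deleting one copy of \emph{every} parallel pair created by identifying $a$ with $c$ destroys both the parity of degrees and the near-triangulation property whenever $a$ and $c$ have a common neighbour other than $b,d$; those parallel edges must be kept, which the multigraph definition permits.

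What is correct and worth keeping is your closing argument: for an Eulerian (multi-)triangulation of the sphere with all faces triangles, the dual is bipartite, the resulting face $2$-coloring orients every face as a directed $3$-cycle, and a $\mathbb{Z}_3$-potential exists because facial boundaries generate the cycle space of a plane graph; this is Heawood's classical tension argument, and it already covers the only case this paper actually invokes (underlying graphs of directed Eulerian multi-triangulations of the sphere). But it does not prove the theorem as stated, since the exceptional face blocks both the coherent orientation and the divisibility-by-$3$ condition. For the full near-triangulation statement you would need a reduction argument that never transfers Kempe chains between colorings---which is what the cited proof of Tsai and West supplies by a different inductive scheme.
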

Note that the underlying graph of a directed Eulerian multi-triangulation of the sphere is an Eulerian near-triangulation; hence, this graph is $3$-colorable. 
In particular, every Eulerian triangulation of the sphere is $3$-colorable. 
\begin{lemma}\label{lem2}
	Let $\Gamma_0$ be a directed Eulerian multi-triangulation of a surface $M$ whose underlying graph is $3$-colorable. 
	The following statements hold:
	\begin{enumerate}
		\item[$(1)$] Suppose that there exists a walk of length $m$ from a vertex $v$ to a vertex $w$. 
		Then $v$ and $w$ have the same color if and only if $m$ is divisible by $3$. 
		\item[$(2)$] Every closed walk has length divisible by $3$. 
	\end{enumerate}
\end{lemma}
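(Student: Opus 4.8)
The plan is to show that any proper $3$-coloring is essentially forced to be compatible with the edge directions. Fix a proper coloring $c:V\to\mathbb{Z}_3$ of the underlying graph of $\Gamma_0$. I claim there is a single $\epsilon\in\{1,2\}\subset\mathbb{Z}_3$ with $c(w)=c(u)+\epsilon$ for every directed edge $u\to w$ of $\Gamma_0$. Granting this, both parts follow at once: a walk of length $m$ from $v$ to $w$ is a sequence of directed edges $v=u_0\to u_1\to\dots\to u_m=w$, so telescoping gives $c(w)-c(v)=m\epsilon$ in $\mathbb{Z}_3$; since $\mathbb{Z}_3$ is a field and $\epsilon\neq 0$, we get $c(v)=c(w)$ exactly when $3\mid m$, which is $(1)$, and $(2)$ is the special case $v=w$.

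To prove the claim I would argue face by face and then propagate. Let $F$ be any face; by hypothesis its boundary is a directed closed walk, and since $\Gamma_0$ is a multi-triangulation this walk has length $3$, say $a\to b\to c\to a$, where $a,b,c$ span a triangle and hence $\{c(a),c(b),c(c)\}=\{0,1,2\}$. The three increments $c(b)-c(a)$, $c(c)-c(b)$, $c(a)-c(c)$ are nonzero elements of $\mathbb{Z}_3$ whose sum is $0$; the only such triples are $(1,1,1)$ and $(2,2,2)$, so all three increments equal a common value $\epsilon_F\in\{1,2\}$. Now suppose faces $F$ and $F'$ share an edge. That edge carries a fixed direction $u\to w$ in the digraph, and a directed closed walk can traverse it only as $u\to w$; hence it is traversed $u\to w$ by the boundary walks of both $F$ and $F'$, giving $\epsilon_F=c(w)-c(u)=\epsilon_{F'}$. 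Since $\Gamma_0$ is a connected $2$-cell embedding, its dual (face-adjacency) graph is connected, so $\epsilon_F$ equals a common value $\epsilon$ for all faces; as every edge lies on the boundary of some face, $c(w)-c(u)=\epsilon$ for every directed edge $u\to w$, establishing the claim.

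I expect the only delicate point to be the single-face step together with the fact that a directed edge is traversed in the same direction by both of its incident face-boundary walks; this is exactly where the hypothesis that each face boundary is a \emph{directed} closed walk (equivalently, that incoming and outgoing edges alternate in every rotation) is used, and without it the color increments around a face need not be constant. The remaining ingredients --- elementary arithmetic in $\mathbb{Z}_3$ and the connectivity of the dual graph of a cellular embedding --- are routine, and multi-edges cause no trouble since the argument only refers to triangular faces with three distinctly colored vertices.
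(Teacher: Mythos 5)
Your proof is correct. It takes a genuinely different (though related) route from the paper's. The paper argues vertex-locally: it fixes an arbitrary vertex $v_0$, uses the alternation of incoming and outgoing edges in the rotation around $v_0$ (forced by the directed Eulerian condition) to show that all in-neighbours of $v_0$ carry one colour and all out-neighbours the third colour, and deduces that along any directed walk every three consecutive vertices receive distinct colours, so the colour sequence is periodic with period $3$. You instead argue face-locally and then globalize: each triangular face boundary is a directed $3$-cycle on three pairwise adjacent, hence distinctly coloured, vertices, which forces a constant increment $\epsilon_F\in\{1,2\}\subset\mathbb{Z}_3$ along its edges; two faces sharing an edge get the same increment because the shared edge is traversed in its unique direction by both boundary walks; connectivity of the dual graph of the closed $2$-cell embedding (a standard fact, and one the paper itself relies on when it takes a spanning tree of $\Gamma^*$) then gives a single $\epsilon$ with $c(w)=c(u)+\epsilon$ for every directed edge $uw$, and both assertions follow by telescoping since $\epsilon$ is invertible modulo $3$. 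Your argument proves a slightly stronger structural fact (after relabelling, the colouring is a homomorphism onto the directed triangle), at the price of invoking dual connectivity, whereas the paper's rotation argument stays entirely inside the link of one vertex; both exploit the same hypothesis that face boundaries are directed closed walks, and you correctly note that looplessness makes every triangular face have three distinct vertices, so multiple edges cause no difficulty.
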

\begin{proof}
	$(1)$. Let $v_0$ be a vertex of $\Gamma_0$ and let $\mathrm{outdeg}(v_0)=\mathrm{indeg}(v_0)=k$. 
	Assume that (with indices taken modulo $2k$)
	$$w_0,w_1,\dots,w_{2k-2},w_{2k-1}$$ 
	are the vertices adjacent to $v_0$ cyclically ordered according to one of two local orientations around $v_0$ (clockwise or anticlockwise). 
	Since the underlying graph of $\Gamma_0$ need not be simple, it is possible that $w_i=w_j$ for distinct $i,j\in\{0,\dots,2k-1\}$. 
	For every $i\in\{0,\dots,k-1\}$ the vertices $v_0,w_{2i},w_{2i+1}$ belong to a face. 
	Thus, without loss of generality suppose that they form the following directed edges
	$$v_0w_{2i},\quad w_{2i}w_{2i+1},\quad w_{2i+1}v_0.$$ 
	Hence, the face whose vertices are $w_{2i},w_{2i-1},v_0$ has the directed edge $w_{2i}w_{2i-1}$, see Fig. 8. 
		\begin{center}
	\begin{tikzpicture}[scale=0.5]
		
		\draw[fill=black] (0,0) circle (3pt);
		
		\draw[fill=black] (0:4cm) circle (3pt);
		\draw[fill=black] (60:4cm) circle (3pt);
		\draw[fill=black] (120:4cm) circle (3pt);		
		\draw[fill=black] (180:4cm) circle (3pt);
		\draw[fill=black] (240:4cm) circle (3pt);
		\draw[fill=black] (300:4cm) circle (3pt);		
		
		\draw [thick, line width=1pt, decoration={markings,
			mark=at position 0.55 with {\arrow[scale=1.25,>=stealth]{<}}},
		postaction={decorate}] (0:4cm) -- (0,0);
		
		\draw [thick, line width=1pt, decoration={markings,
			mark=at position 0.55 with {\arrow[scale=1.25,>=stealth]{<}}},
		postaction={decorate}] (0,0) -- (60:4cm);
		
		\draw [thick, line width=1pt, decoration={markings,
			mark=at position 0.55 with {\arrow[scale=1.25,>=stealth]{<}}},
		postaction={decorate}] (120:4cm) -- (0,0);
		
		\draw [thick, line width=1pt, decoration={markings,
			mark=at position 0.55 with {\arrow[scale=1.25,>=stealth]{<}}},
		postaction={decorate}] (0,0) -- (180:4cm);
		
		\draw [thick, line width=1pt, decoration={markings,
			mark=at position 0.55 with {\arrow[scale=1.25,>=stealth]{<}}},
		postaction={decorate}] (240:4cm) -- (0,0);
		
		\draw [thick, line width=1pt, decoration={markings,
			mark=at position 0.55 with {\arrow[scale=1.25,>=stealth]{<}}},
		postaction={decorate}] (0,0) -- (300:4cm);
		
		\draw [thick, line width=1pt, decoration={markings,
			mark=at position 0.55 with {\arrow[scale=1.25,>=stealth]{<}}},
		postaction={decorate}] (60:4cm) -- (0:4cm);
		
		\draw [thick, line width=1pt, decoration={markings,
			mark=at position 0.56 with {\arrow[scale=1.25,>=stealth]{<}}},
		postaction={decorate}] (300:4cm) -- (0:4cm);
		
		\draw [thick, line width=1pt, decoration={markings,
			mark=at position 0.55 with {\arrow[scale=1.25,>=stealth]{<}}},
		postaction={decorate}] (300:4cm) -- (240:4cm);
		
		\draw [thick, line width=1pt, decoration={markings,
			mark=at position 0.56 with {\arrow[scale=1.25,>=stealth]{<}}},
		postaction={decorate}] (180:4cm) -- (240:4cm);
		
		\draw [thick, line width=1pt, decoration={markings,
			mark=at position 0.55 with {\arrow[scale=1.25,>=stealth]{<}}},
		postaction={decorate}] (180:4cm) -- (120:4cm);
		
		\draw [thick, line width=1pt] (2cm,3.4641cm) -- (1.25cm,3.4641cm);
		\draw [thick, line width=1pt] (-2cm,3.4641cm) -- (-1.25cm,3.4641cm);
		
		\draw[line width=1.5pt, dash pattern=on 0pt off 4pt, dotted] (-1.25cm,3.4641cm) -- (1.25cm,3.4641cm);
		
		\draw [thick, line width=1pt, dashed] (0,0) -- (72:1cm);
		\draw [thick, line width=1pt, dashed] (0,0) -- (84:1cm);
		\draw [thick, line width=1pt, dashed] (0,0) -- (96:1cm);
		\draw [thick, line width=1pt, dashed] (0,0) -- (108:1cm);
		
		\node at (0.57,-0.35) {$v_0$};
		\node at (248:3.45cm) {$c_1$};
		\node at (306:3.42cm) {$c_3$};
		\node at (6:3.42cm) {$c_1$};
		\node at (66:3.42cm) {$c_3$};
		\node at (125:3.42cm) {$c_1$};
		\node at (186:3.42cm) {$c_3$};
		
		\node at (0cm,-0.7cm) {$c_2$};
		
		\node at (0:4.6cm) {$w_2$};
		\node at (60:4.48cm) {$w_3$};
		\node at (120:4.48cm) {$w_{2k-2}$};
		\node at (180:5.1cm) {$w_{2k-1}$};
		\node at (240:4.48cm) {$w_0$};
		\node at (300:4.48cm) {$w_1$};
		
		\node at (0:6cm) {\color{white} .};
		
		\end{tikzpicture}
		\captionof{figure}{ }
	\end{center}
	Let $c_1,c_2,c_3$ be colors assigned to the vertices of $\Gamma_0$ in its underlying graph $3$-coloring with $w_0$ and $v_0$ colored in $c_1$ and $c_2$, respectively. 
	Any two consecutive vertices in the cyclic sequence $w_0,w_1,\dots,w_{2k-2},w_{2k-1}$ form a face with $v_0$ and thus the vertices of this sequence must be colored in $c_1$ and $c_3$ alternately. 
	Therefore, every walk in $\Gamma_0$ passing through $v_0$ contains a subwalk of the form $w_{2i+1},v_0,w_{2j}$ for some $i,j\in\{0,\dots,k-1\}$ and the vertices of this subwalk are colored in $c_3,c_2,c_1$, respectively. 
	Since the choice of $v_0$ is arbitrary, any $3$ consecutive vertices in any walk have mutually distinct colors. 
	Now, consider a walk of length $m$ connecting vertices $v,w$. 
	We write this walk as the sequence of vertices 
	$$v=u_0,u_1,u_2\dots,u_{m-1},u_m=w.$$ 
	Then, any $3$ consecutive vertices $u_i, u_{i+1}, u_{i+2}$ have mutually distinct colors and for every $i\in\{0,\dots,m-3\}$ the vertex $u_{i+3}$ has the same color as $u_i$. 
	Hence, $v$ and $w$ have the same color if and only if $m$ is divisible by $3$. 
	
	$(2)$. It follows from the statement $(1)$ for $v=w$. 
\end{proof}

\begin{proof}[Proof of the statement {\rm (1)} of Theorem \ref{th1}]
Let $(\Gamma,\tau)$ be a $z$-oriented triangulation of $M$. 
Suppose that $(\Gamma,\tau)$ is $3$-colorable and all its faces are of type II. 
Therefore, $\Gamma_{\tau}$ is a directed Eulerian triangulation of $M$ and, by the second statement of Lemma \ref{lem2}, every closed walk in $\Gamma_{\tau}$ has length divisible by $3$. 
Using Proposition \ref{prop2} we conclude that $\mathcal{X}_{\tau}$ is not ergodic. 

Suppose now that $\mathcal{X}_{\tau}$ is not ergodic. 
Then, by Proposition \ref{lem1}, all faces of $(\Gamma, \tau)$ are of type II and $\Gamma_{\tau}$ is a directed Eulerian triangulation of $M$. 
By Proposition \ref{prop2}, every closed walk in $\Gamma_{\tau}$ has length divisible by $3$. 

Consider the dual graph $\Gamma^{*}$ of $\Gamma$ and any spanning tree $T$ of $\Gamma^{*}$. 
The graphs $\Gamma$ and $\Gamma^*$ have the same set of edges and we denote by $U$ the subgraph of $\Gamma$ consisting of all edges which are not edges of $T$. 
Since $T$ is a spanning tree of $\Gamma^{*}$, all vertices of $\Gamma$ belong to $U$. 
We remove $U$ from $M$ and obtain $M\setminus U$, which is homeomorphic to an open disk. 
There exists a unique fundamental polygon $D$ of $M$ whose interior is $M\setminus U$. 
Note that each edge of $U$ corresponds to precisely $2$ sides of $D$, but a vertex of $U$ may correspond to several vertices of $D$. 
Thus, we also obtain from $\Gamma$ a graph embedded in $D$ whose vertex set consists of the vertices of $D$ and whose edge set is the union of the set of sides of $D$ and the set of all edges of $\Gamma$ that do not belong to $U$. 
For the reason that $\Gamma$ is the underlying graph of $\Gamma_{\tau}$, we consider this new graph as a digraph $\Gamma_{D}$ with edge directions inherited from $\Gamma_{\tau}$. 
However, $\Gamma_{D}$ need not be Eulerian (see Fig. 9 for an example of such a construction for the transition graph of $(\mathcal{O},\tau_2)$ from Example \ref{ex1}).
\begin{center} 
	\begin{tikzpicture}[scale=0.48] 
		\begin{scope}
			\coordinate (A1) at (90:5cm);
			\coordinate (A2) at (210:5cm);
			\coordinate (A3) at (330:5cm);
			
			\coordinate (B1) at (270:1.25cm);
			\coordinate (B2) at (30:1.25cm);
			\coordinate (B3) at (150:1.25cm);
			
			\draw[thick, line width=1.1pt, decoration={markings,
				mark=at position 0.5 with {\arrow[scale=1,>=stealth]{>}}},
			postaction={decorate}] (A1) -- (A2);%
			\draw[thick, line width=0.5pt, decoration={markings,
				mark=at position 0.5 with {\arrow[scale=1.3,>=stealth]{>}}},
			postaction={decorate}] (A2) -- (A3);
			\draw[thick, line width=1.1pt, decoration={markings,
				mark=at position 0.5 with {\arrow[scale=1,>=stealth]{>}}},
			postaction={decorate}] (A3) -- (A1);%
			
			\draw[thick, line width=0.5pt, decoration={markings,
				mark=at position 0.6 with {\arrow[scale=1.3,>=stealth]{>}}},
			postaction={decorate}] (B1) -- (B2);
			\draw[thick, line width=0.5pt, decoration={markings,
				mark=at position 0.6 with {\arrow[scale=1.3,>=stealth]{>}}},
			postaction={decorate}] (B2) -- (B3);
			\draw[thick, line width=0.5pt, decoration={markings,
				mark=at position 0.6 with {\arrow[scale=1.3,>=stealth]{>}}},
			postaction={decorate}] (B3) -- (B1);
			
			\draw[thick, line width=0.5pt, decoration={markings,
				mark=at position 0.6 with {\arrow[scale=1.3,>=stealth]{>}}},
			postaction={decorate}] (A1) -- (B2);
			\draw[thick, line width=1.1pt, decoration={markings,
				mark=at position 0.6 with {\arrow[scale=1,>=stealth]{<}}},
			postaction={decorate}] (A1) -- (B3);%
			
			\draw[thick, line width=1.1pt, decoration={markings,
				mark=at position 0.6 with {\arrow[scale=1,>=stealth]{<}}},
			postaction={decorate}] (A2) -- (B1);%
			\draw[thick, line width=0.5pt, decoration={markings,
				mark=at position 0.6 with {\arrow[scale=1.3,>=stealth]{>}}},
			postaction={decorate}] (A2) -- (B3);
			
			\draw[thick, line width=0.5pt, decoration={markings,
				mark=at position 0.6 with {\arrow[scale=1.3,>=stealth]{>}}},
			postaction={decorate}] (A3) -- (B1);
			\draw[thick, line width=1.1pt, decoration={markings,
				mark=at position 0.6 with {\arrow[scale=1,>=stealth]{<}}},
			postaction={decorate}] (A3) -- (B2);%
			
			\node at (-2cm,3cm) {$\boldsymbol{U}$};

			\node at (90:5.5cm) {$a_1$};
			\node at (215:5.5cm) {$a_2$};
			\node at (325:5.5cm) {$a_3$};
			
			\node at (27:1.8cm) {$b_2$};
			\node at (183:1.1cm) {$b_3$};
			\node at (307:1.25cm) {$b_1$};	
			
			\node at (-5cm,0cm) {$\Gamma_{\tau}$};
			
			\draw[fill=black] (A1) circle (3.5pt);
			\draw[fill=black] (A2) circle (3.5pt);	
			\draw[fill=black] (A3) circle (3.5pt);
			
			\draw[fill=black] (B1) circle (3.5pt);
			\draw[fill=black] (B2) circle (3.5pt);	
			\draw[fill=black] (B3) circle (3.5pt);
			
			\coordinate (T0) at ($ (0,0)!0.333333!($(B1)+(B2)+(B3)$) $);
			
			\coordinate (T1) at ($ (0,0)!0.333333!($(B1)+(A2)+(B3)$) $);
			\coordinate (T2) at ($ (0,0)!0.333333!($(B1)+(B2)+(A3)$) $);
			\coordinate (T3) at ($ (0,0)!0.333333!($(A1)+(B2)+(B3)$) $);
			
			\coordinate (T4) at ($ (0,0)!0.333333!($(A1)+(A2)+(B3)$) $);
			\coordinate (T5) at ($ (0,0)!0.333333!($(A1)+(B2)+(A3)$) $);
			\coordinate (T6) at ($ (0,0)!0.333333!($(B1)+(A2)+(A3)$) $);
			
			\coordinate (T7) at (0,-3.5cm);

			\draw[dashed, thick, line width=0.5pt] (T0) -- (T1);
			\draw[dashed, thick, line width=0.5pt] (T4) -- (T1);
			\draw[dashed, thick, line width=0.5pt] (T0) -- (T2);
			\draw[dashed, thick, line width=0.5pt] (T6) -- (T2);
			\draw[dashed, thick, line width=0.5pt] (T0) -- (T3);
			\draw[dashed, thick, line width=0.5pt] (T5) -- (T3);
			
			\draw[dashed, thick, line width=0.5pt] (T6) -- (T7);
			
			\draw[fill=white] (T0) circle (3.5pt);
			\draw[fill=white] (T1) circle (3.5pt);
			\draw[fill=white] (T2) circle (3.5pt);
			\draw[fill=white] (T3) circle (3.5pt);
			\draw[fill=white] (T4) circle (3.5pt);
			\draw[fill=white] (T5) circle (3.5pt);
			\draw[fill=white] (T6) circle (3.5pt);
			\draw[fill=white] (T7) circle (3.5pt);
			
			\node at (0.6cm,-3.5cm) {$T$};
			\end{scope}
			\begin{scope}[xshift=12cm,yshift=1cm, rotate=180,xscale=-1]
				\coordinate (w1) at (0:4cm);
				\coordinate (w2) at (36:4cm);
				\coordinate (w3) at (72:4cm);
				\coordinate (w4) at (108:4cm);
				\coordinate (w5) at (144:4cm);
				\coordinate (w6) at (180:4cm);
				\coordinate (w7) at (216:4cm);
				\coordinate (w8) at (252:4cm);
				\coordinate (w9) at (288:4cm);
				\coordinate (w10) at (324:4cm);
				
				\draw[thick, line width=1.1pt, decoration={markings,
					mark=at position 0.55 with {\arrow[scale=1,>=stealth]{<}}},
				postaction={decorate}] (w1) -- (w2); 
				
				\draw[thick, line width=1.1pt, decoration={markings,
					mark=at position 0.55 with {\arrow[scale=1,>=stealth]{>}}},
				postaction={decorate}] (w2) -- (w3);
				
				\draw[thick, line width=1.1pt, decoration={markings,
					mark=at position 0.55 with {\arrow[scale=1,>=stealth]{>}}},
				postaction={decorate}] (w3) -- (w4);
				
				\draw[thick, line width=1.1pt, decoration={markings,
					mark=at position 0.55 with {\arrow[scale=1,>=stealth]{>}}},
				postaction={decorate}] (w4) -- (w5);
				
				\draw[thick, line width=1.1pt, decoration={markings,
					mark=at position 0.55 with {\arrow[scale=1,>=stealth]{<}}},
				postaction={decorate}] (w5) -- (w6);
				
				\draw[thick, line width=1.1pt, decoration={markings,
					mark=at position 0.55 with {\arrow[scale=1,>=stealth]{>}}},
				postaction={decorate}] (w6) -- (w7);
				
				\draw[thick, line width=1.1pt, decoration={markings,
					mark=at position 0.55 with {\arrow[scale=1,>=stealth]{<}}},
				postaction={decorate}] (w7) -- (w8);
				
				\draw[thick, line width=1.1pt, decoration={markings,
					mark=at position 0.55 with {\arrow[scale=1,>=stealth]{<}}},
				postaction={decorate}] (w8) -- (w9); 
				
				\draw[thick, line width=1.1pt, decoration={markings,
					mark=at position 0.55 with {\arrow[scale=1,>=stealth]{>}}},
				postaction={decorate}] (w9) -- (w10); 
				
				\draw[thick, line width=1.1pt, decoration={markings,
					mark=at position 0.55 with {\arrow[scale=1,>=stealth]{<}}},
				postaction={decorate}] (w10) -- (w1);
				
				\draw[thick, line width=0.5pt, decoration={markings,
					mark=at position 0.56 with {\arrow[scale=1.3,>=stealth]{>}}},
				postaction={decorate}] (w2) -- (w9);
				
				\draw[thick, line width=0.5pt, decoration={markings,
					mark=at position 0.56 with {\arrow[scale=1.3,>=stealth]{>}}},
				postaction={decorate}] (w3) -- (w6);
				
				\draw[thick, line width=0.5pt, decoration={markings,
					mark=at position 0.56 with {\arrow[scale=1.3,>=stealth]{>}}},
				postaction={decorate}] (w5) -- (w3);
				
				\draw[thick, line width=0.5pt, decoration={markings,
					mark=at position 0.56 with {\arrow[scale=1.3,>=stealth]{>}}},
				postaction={decorate}] (w6) -- (w2);
				
				\draw[thick, line width=0.5pt, decoration={markings,
					mark=at position 0.56 with {\arrow[scale=1.3,>=stealth]{>}}},
				postaction={decorate}] (w7) -- (w9);
				
				\draw[thick, line width=0.5pt, decoration={markings,
					mark=at position 0.56 with {\arrow[scale=1.3,>=stealth]{>}}},
				postaction={decorate}] (w9) -- (w6);
				
				\draw[thick, line width=0.5pt, decoration={markings,
					mark=at position 0.56 with {\arrow[scale=1.3,>=stealth]{>}}},
				postaction={decorate}] (w10) -- (w2);

				\draw[fill=black] (w1) circle (3.5pt);
				\draw[fill=black] (w2) circle (3.5pt);
				\draw[fill=black] (w3) circle (3.5pt);
				\draw[fill=black] (w4) circle (3.5pt);
				\draw[fill=black] (w5) circle (3.5pt);
				\draw[fill=black] (w6) circle (3.5pt);
				\draw[fill=black] (w7) circle (3.5pt);
				\draw[fill=black] (w8) circle (3.5pt);
				\draw[fill=black] (w9) circle (3.5pt);
				\draw[fill=black] (w10) circle (3.5pt);
				
				\coordinate (S0) at ($ (0,0)!0.333333!($(w2)+(w6)+(w9)$) $);
				
				\coordinate (S1) at ($ (0,0)!0.333333!($(w6)+(w7)+(w9)$) $);
				\coordinate (S2) at ($ (0,0)!0.333333!($(w2)+(w3)+(w6)$) $);
				\coordinate (S3) at ($ (0,0)!0.333333!($(w2)+(w9)+(w10)$) $);
				
				\coordinate (S4) at ($ (0,0)!0.333333!($(w7)+(w8)+(w9)$) $);
				\coordinate (S5) at ($ (0,0)!0.333333!($(w1)+(w2)+(w10)$) $);
				\coordinate (S6) at ($ (0,0)!0.333333!($(w3)+(w5)+(w6)$) $);
				
				\coordinate (S7) at ($ (0,0)!0.333333!($(w3)+(w4)+(w5)$) $);

				\draw[dashed, thick, line width=0.5pt] (S0) -- (S1);
				\draw[dashed, thick, line width=0.5pt] (S4) -- (S1);
				\draw[dashed, thick, line width=0.5pt] (S0) -- (S2);
				\draw[dashed, thick, line width=0.5pt] (S6) -- (S2);
				\draw[dashed, thick, line width=0.5pt] (S0) -- (S3);
				\draw[dashed, thick, line width=0.5pt] (S5) -- (S3);
				
				\draw[dashed, thick, line width=0.5pt] (S6) -- (S7);
				
				\draw[fill=white] (S0) circle (3.5pt);
				\draw[fill=white] (S1) circle (3.5pt);
				\draw[fill=white] (S2) circle (3.5pt);
				\draw[fill=white] (S3) circle (3.5pt);
				\draw[fill=white] (S4) circle (3.5pt);
				\draw[fill=white] (S5) circle (3.5pt);
				\draw[fill=white] (S6) circle (3.5pt);
				\draw[fill=white] (S7) circle (3.5pt);
				
				\node at ($(S0)+(-0.5cm,0.5cm)$) {$T$};
				
				\node at (0:4.75cm) {$a_3$};
				\node at (36:4.75cm) {$b_2$};
				\node at (72:4.75cm) {$a_3$};
				\node at (108:4.75cm) {$a_1$};
				\node at (144:4.75cm) {$a_2$};
				\node at (180:4.75cm) {$b_1$};
				\node at (216:4.75cm) {$a_2$};
				\node at (252:4.75cm) {$a_1$};
				\node at (288:4.75cm) {$b_3$};
				\node at (324:4.75cm) {$a_1$};
				
				\node at (6.5cm,0) {$\Gamma_{D}$};
				\node at (308:4.8cm) {$\boldsymbol{\partial D}$};
			\end{scope}
			
	\end{tikzpicture}
	\captionof{figure}{ }
\end{center}
%

We exploit $\Gamma_{D}$ in constructing the digraph $\Gamma_0$ on the sphere.
Consider the embedding of a digraph $\Gamma_{D'}$ in the polygon $D'$, which is a copy of the embedding of $\Gamma_{D}$ in $D$. 
Then, there exists an isomorphism
$i:\Gamma_{D}\to\Gamma_{D'}$ 
such that
$i(v)=v',$ 
where $v'$ is a vertex of $\Gamma_{D'}$ obtained as a copy of a vertex $v$ of $\Gamma_{D}$. 
We identify the boundaries $\partial D$ and $\partial D'$ according to this isomorphism and we obtain a digraph $\Gamma_0$. 
It is easy to see that for each vertex of $\Gamma_0$ the outdegree equals the indegree  
and all faces of $\Gamma_0$ are triangles. 
Therefore, $\Gamma_0$ is a directed Eulerian multi-triangulation of the sphere and its underlying graph is $3$-colorable. 

To prove that $\Gamma$ 
is $3$-colorable, it suffices to show that if two vertices of $\Gamma_{D}$ are obtained from the same vertex of $\Gamma_{\tau}$, then these vertices have the same color in the $3$-coloring of the underlying graph of $\Gamma_0$. 

Let $v,w$ be vertices of $\Gamma_0$ obtained from the same vertex of $\Gamma_{\tau}$. 
We take any walk 
in the underlying graph of $\Gamma_0$. 
This walk is not necessarily a directed walk in $\Gamma_0$. 
So, if this walk contains subsequence $v',w'$ such that $w'v'$ is a directed edge of $\Gamma_0$, then there exists a vertex $u$ such that $w'v',v'u,uw'$ are directed edges of one of faces of $\Gamma_0$ and we insert $u$ between $v'$ and $w'$ in the walk.  In other words, we replace $v'w'$ with two consecutive directed edges $v'u,uw'$. 
In this way, we construct a walk from $v$ to $w$ in $\Gamma_0$. 

Note that if there is a directed edge in $\Gamma_{D'}$ (as a subgraph of $\Gamma_0$) that is not contained in $\partial D=\partial D'$, then there exists an edge in $\Gamma_{D}$  with the same vertices and the same direction.
Thus, there exists a walk $P$ from $v$ to $w$ in the subgraph $\Gamma_{D}$ of $\Gamma_0$. 
Since $P$ corresponds to a closed walk of the same length in $\Gamma_{\tau}$, 
the length of $P$ is divisible by $3$. 
Therefore, by the first part of Lemma \ref{lem2}, $v$ and $w$ have the same color.
\end{proof}

Recall that for a graph $G$ and a subset $U$ of its vertex set, the graph $G - U$ is obtained from $G$ by deleting all vertices of $U$ and all edges incident to them. 
If $G$ is embedded in a surface, then a {\it color factor} of $G$ is a subset $\mathcal{C}$ of its vertex set such that every face of $G$ has precisely one vertex in $\mathcal{C}$. 

To prove the second part of Theorem \ref{th1} we need the following result (see \cite{Fisk,MoharProj}). 
\begin{theorem}\label{THmohar}
	Let $G$ be an Eulerian triangulation of the real projective plane. 
	Then $G$ has a color factor. 
	If $\mathcal{C}$ is any color factor of $G$, then $G$ is $3$-colorable if and only if $G-\mathcal{C}$ is bipartite. 
\end{theorem}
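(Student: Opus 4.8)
The plan is to reduce everything to the sphere by passing to the orientation double cover $\pi\colon\widetilde M\to M$ of the real projective plane $M$, where $\widetilde M$ is the sphere and the deck group is generated by a fixed-point-free involution $\sigma$. Pulling $G$ back along $\pi$ produces an embedded graph $\widetilde G$ in $\widetilde M$, and I would first record the routine facts: each face of $G$ is a disk, hence evenly covered, so it lifts to two triangular faces and $\widetilde G$ is a triangulation; $\widetilde G$ is simple (a loop or a multiple edge of $\widetilde G$ would project to one in $G$); $\widetilde G$ is Eulerian because the covering is a local homeomorphism and so preserves degrees; and $\sigma$ is a fixed-point-free simplicial involution with $G=\widetilde G/\sigma$. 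By Theorem \ref{ET3} (in the form noted immediately after it) $\widetilde G$ is $3$-colorable, and I would additionally prove the key uniqueness statement: a triangulation of the sphere has at most one proper $3$-coloring up to permuting the three colors. This follows by propagation: fix a face $T_0$; in any $3$-coloring its three vertices receive the three colors, and along any edge shared by an already-colored face and an adjacent face the color of the opposite vertex is forced, so since the dual of a triangulation of the sphere is connected the coloring is determined by its values on $T_0$.

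Next I would pin down color factors on the sphere. If $\mathcal C$ is a color factor of an Eulerian triangulation $H$ of the sphere, then $\mathcal C$ is independent (two $\mathcal C$-vertices on a common edge would give a face with two $\mathcal C$-vertices), and deleting $\mathcal C$ merges, around each $v\in\mathcal C$, the faces at $v$ into a single face bounded by the link cycle $C_v$ (formed by the neighbors of $v$); since every face of $H$ has exactly one $\mathcal C$-vertex, the faces of $H-\mathcal C$ are precisely the cycles $C_v$, $v\in\mathcal C$, all of even length because $H$ is Eulerian. A connected graph cellularly embedded in the sphere with all faces even is bipartite, since any cycle bounds a disk and hence has length congruent mod $2$ to the sum of the even face-lengths it encloses. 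Thus $H-\mathcal C$ is bipartite; two-coloring it and assigning $\mathcal C$ the third color gives a $3$-coloring of $H$ with $\mathcal C$ a color class. Together with the uniqueness statement this shows $H$ has exactly three color factors, namely the three classes of its essentially unique $3$-coloring.

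Now I return to $G$ and $\widetilde G$. Any color factor of $G$ pulls back to a $\sigma$-invariant color factor of $\widetilde G$, hence to one of the three color classes $X_1,X_2,X_3$ of $\widetilde G$; conversely a $\sigma$-invariant $X_i$ descends to a color factor of $G$. Since $\sigma$ is an automorphism of $\widetilde G$ it permutes $\{X_1,X_2,X_3\}$ by a permutation of order dividing $2$: either it is the identity, whence all three $X_i$ descend and $G$ has exactly three color factors, or it is a transposition, whence exactly one $X_i$, say $X_1$, is $\sigma$-invariant and $G$ has a unique color factor $\mathcal C=\pi(X_1)$. In both cases a color factor exists, which is the first assertion. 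For the equivalence, fix a color factor $\mathcal C$ with $\pi^{-1}(\mathcal C)=X_1$; the graph $\widetilde G-X_1$ is connected (any path can be rerouted around a deleted vertex along its link cycle) and bipartite, so its unique bipartition must, by uniqueness of the coloring, be $\{X_2,X_3\}$. If $\sigma$ fixes each $X_i$, then $X_2$ and $X_3$ descend to a bipartition of $G-\mathcal C$, so $G-\mathcal C$ is bipartite, and the then $\sigma$-invariant $3$-coloring of $\widetilde G$ descends, so $G$ is $3$-colorable. If $\sigma$ swaps $X_2$ and $X_3$, then any bipartition of $G-\mathcal C$ would pull back to a $\sigma$-invariant bipartition of $\widetilde G-X_1$, impossible since its unique bipartition $\{X_2,X_3\}$ is interchanged by $\sigma$; and $G$ is not $3$-colorable, since a $3$-coloring of $G$ would lift to a $\sigma$-invariant $3$-coloring of $\widetilde G$ whose color classes are therefore $\sigma$-invariant, whereas by uniqueness these classes are $X_1,X_2,X_3$ and $\sigma$ does not fix all of them. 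In both cases ``$G$ is $3$-colorable'' and ``$G-\mathcal C$ is bipartite'' hold together, which is the equivalence.

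The step I expect to be the main obstacle is the uniqueness of the sphere $3$-coloring together with the equivariant bookkeeping it drives: identifying the bipartition of $\widetilde G-X_1$ with $\{X_2,X_3\}$, and verifying that colorings, bipartitions, and color factors descend correctly because $\sigma$ acts freely. A secondary technical point is ensuring that the relevant deleted graphs are genuinely connected and cellularly embedded, so that ``all faces even implies bipartite'' and ``unique bipartition'' may be applied; this uses that $\mathcal C$ is a color factor and not merely an independent set.
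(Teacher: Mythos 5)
The paper itself gives no proof of Theorem \ref{THmohar}: it is imported from Fisk and Mohar, so there is no internal argument to compare against. Your double-cover proof is an independent derivation and it is essentially correct. Lifting $G$ along the orientation double cover $\pi\colon S^2\to M$ gives an Eulerian triangulation $\widetilde G$ of the sphere, $3$-colorable by Theorem \ref{ET3}; the propagation argument for uniqueness of the $3$-coloring up to renaming colors is valid (it only uses dual connectedness); the identification of the color factors of an Eulerian sphere triangulation $H$ with the three color classes is correct ($\mathcal{C}$ independent, the faces of $H-\mathcal{C}$ are the link cycles of the $\mathcal{C}$-vertices, hence even, the enclosed-face parity argument gives bipartiteness, and uniqueness of the coloring finishes it); and the equivariant bookkeeping is sound: $\sigma$ permutes $\{X_1,X_2,X_3\}$ either trivially or by a transposition, existence of a color factor follows in both cases, $\widetilde G-X_1$ is connected by rerouting along links so its bipartition is exactly $\{X_2,X_3\}$, and in the two cases ``$G$ is $3$-colorable'' and ``$G-\mathcal{C}$ is bipartite'' hold or fail together. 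This is the right tool, since on the projective plane ``all faces of $G-\mathcal{C}$ even'' does not by itself give bipartiteness; your argument also yields, as a by-product, that $G$ has three color factors or exactly one according to the action of $\sigma$.

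Two routine verifications are taken for granted and should be written out. First, the connectivity of $\widetilde G=\pi^{-1}(G)$, i.e. that $G$ does not lift to the sphere: this is needed to call $\widetilde G$ a triangulation (and Eulerian), and every connectedness-based step upstairs relies on it; if $G$ lifted, $\sigma$ would interchange two disjoint copies and the dichotomy would collapse. A one-line fix: a triangulation of the projective plane has $|E|=3|V|-3>3|V|-6$, so $G$ is nonplanar and cannot lift; alternatively, the complementary regions of $\widetilde G$ in $S^2$ are the lifted open triangles, and a graph embedded in the sphere whose complementary regions are all open disks must be connected. Second, simplicity of $\widetilde G$: ``a multiple edge would project to one in $G$'' does not cover the case of the two lifts of a single edge sharing both endpoints; rule it out using that $\sigma$ is fixed point free and the endpoints of an edge of $G$ are distinct vertices, so the two lifts have different endpoint pairs. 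With these additions the proof is complete.
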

\begin{proof}[Proof of the statement {\rm (2)} of Theorem \ref{th1}]
If $M$ is the sphere, then it follows from the first part of Theorem \ref{th1} and Theorem \ref{ET3}. 
Assume that $M$ is the real projective plane. 
Suppose that all faces of $(\Gamma, \tau)$ are of type II. 
Therefore, $\Gamma$ is an Eulerian triangulation of $M$ containing a color factor $\mathcal{C}\subset V$. 
Given that all edges of $(\Gamma, \tau)$ are of type II, we consider $\Gamma - \mathcal{C}$ as a directed graph with edge directions
induced by $\tau$. 
Let $v$ be a vertex of $\Gamma - \mathcal{C}$ and let 
$$v_0,v_0',v_1,v'_1,\dots,v_{m-1},v'_{m-1},$$
where $m\geq 2$, be the vertices of $\Gamma$ incident to $v$, cyclically ordered according to one of two local orientations around $v$ (clockwise or anticlockwise). 
Without loss of generality, we suppose that for each $i\in\{0,\dots,m-1\}$ (with indices taken modulo $m$):
\begin{enumerate}
	\item[$\bullet$] the face with vertices $v,v_i,v'_i$ has the following edges of type II: $v_iv,vv'_i,v'_iv_i$;
	\item[$\bullet$] the face with vertices $v,v'_i,v_{i+1}$ has the following edges of type II: $vv'_i,v'_iv_{i+1},v_{i+1}v$,
\end{enumerate}
see Fig. 10.
		\begin{center}
	\begin{tikzpicture}[scale=0.5]
		
		\draw[fill=black] (0,0) circle (3pt);
		
		\draw[fill=black] (0:4cm) circle (3pt);
		\draw[fill=black] (60:4cm) circle (3pt);
		\draw[fill=black] (120:4cm) circle (3pt);		
		\draw[fill=black] (180:4cm) circle (3pt);
		\draw[fill=black] (240:4cm) circle (3pt);
		\draw[fill=black] (300:4cm) circle (3pt);	
		
		\draw [thick, line width=1pt, decoration={markings,
			mark=at position 0.6 with {\arrow[scale=1.25,>=stealth]{<<}}},
		postaction={decorate}] (0:4cm) -- (0,0);
		
		\draw [thick, line width=1pt, decoration={markings,
			mark=at position 0.6 with {\arrow[scale=1.25,>=stealth]{<<}}},
		postaction={decorate}] (0,0) -- (60:4cm);
		
		\draw [thick, line width=1pt, decoration={markings,
			mark=at position 0.6 with {\arrow[scale=1.25,>=stealth]{<<}}},
		postaction={decorate}] (120:4cm) -- (0,0);
		
		\draw [thick, line width=1pt, decoration={markings,
			mark=at position 0.6 with {\arrow[scale=1.25,>=stealth]{<<}}},
		postaction={decorate}] (0,0) -- (180:4cm);
		
		\draw [thick, line width=1pt, decoration={markings,
			mark=at position 0.6 with {\arrow[scale=1.25,>=stealth]{<<}}},
		postaction={decorate}] (240:4cm) -- (0,0);
		
		\draw [thick, line width=1pt, decoration={markings,
			mark=at position 0.6 with {\arrow[scale=1.25,>=stealth]{<<}}},
		postaction={decorate}] (0,0) -- (300:4cm);
		
		\draw [thick, line width=1pt, decoration={markings,
			mark=at position 0.6 with {\arrow[scale=1.25,>=stealth]{<<}}},
		postaction={decorate}] (60:4cm) -- (0:4cm);
		
		\draw [thick, line width=1pt, decoration={markings,
			mark=at position 0.6 with {\arrow[scale=1.25,>=stealth]{<<}}},
		postaction={decorate}] (300:4cm) -- (0:4cm);
		
		\draw [thick, line width=1pt, decoration={markings,
			mark=at position 0.6 with {\arrow[scale=1.25,>=stealth]{<<}}},
		postaction={decorate}] (300:4cm) -- (240:4cm);
		
		\draw [thick, line width=1pt, decoration={markings,
			mark=at position 0.6 with {\arrow[scale=1.25,>=stealth]{<<}}},
		postaction={decorate}] (180:4cm) -- (240:4cm);
		
		\draw [thick, line width=1pt, decoration={markings,
			mark=at position 0.6 with {\arrow[scale=1.25,>=stealth]{<<}}},
		postaction={decorate}] (180:4cm) -- (120:4cm);
		
		\draw [thick, line width=1pt] (2cm,3.4641cm) -- (1.25cm,3.4641cm);
		\draw [thick, line width=1pt] (-2cm,3.4641cm) -- (-1.25cm,3.4641cm);
		
		\draw[line width=1.5pt, dash pattern=on 0pt off 4pt, dotted] (-1.25cm,3.4641cm) -- (1.25cm,3.4641cm);
		
		\draw [thick, line width=1pt, dashed] (0,0) -- (72:1cm);
		\draw [thick, line width=1pt, dashed] (0,0) -- (84:1cm);
		\draw [thick, line width=1pt, dashed] (0,0) -- (96:1cm);
		\draw [thick, line width=1pt, dashed] (0,0) -- (108:1cm);
		
		\node at (0.55,-0.38) {$v$};
		
		\node at (0:4.95cm) {$v'_{m-1}$};
		\node at (60:4.55cm) {$v_{m-1}$};
		\node at (120:4.55cm) {$v'_1$};
		\node at (180:4.5cm) {$v_1$};
		\node at (240:4.55cm) {$v'_0$};
		\node at (300:4.55cm) {$v_0$};
		
		\node[color=white] at (180:5.25cm) {$.$};
		
	\end{tikzpicture}
	\captionof{figure}{ }
\end{center}
Since $v\notin\mathcal{C}$, either
$$\{v_0,\dots,v_{m-1}\}\subset\mathcal{C}\quad\text{and}\quad\{v'_0,\dots,v'_{m-1}\}\cap\mathcal{C}=\varnothing$$
or
$$\{v'_0,\dots,v'_{m-1}\}\subset\mathcal{C}\quad\text{and}\quad\{v_0,\dots,v_{m-1}\}\cap\mathcal{C}=\varnothing.$$
In other words, all edges of $\Gamma - \mathcal{C}$ incident to $v$ are directed in the same way: 
either all out of $v$ or all into $v$. 
Thus, $\Gamma - \mathcal{C}$ is bipartite and (by Theorem \ref{THmohar}) $\Gamma$ is $3$-colorable. 
Hence, the $3$-colorability condition from the first part of Theorem \ref{th1} can be omitted in the case of the real projective plane. 
\end{proof}

\section{Examples}
Let $\Gamma$ and $\Gamma'$ be triangulations of connected closed $2$-dimensional surfaces $M$ and $M'$ and let $F$ and $F'$ be their faces (respectively). 
A homeomorphism $g:\partial F\to\partial F'$ is called {\it special} if it transfers each vertex to a vertex. 
The triangulation $\Gamma\#_g\Gamma'$ of $M\# M'$ is obtained from $\Gamma$ and $\Gamma'$ by removing the interiors of $F$ and $F'$ (respectively) and gluing their boundaries $\partial F$ and $\partial F'$ by $g$. 
We say that $\Gamma\#_g\Gamma'$ is the {\it connected sum} of $\Gamma$ and $\Gamma'$. 

\begin{lemma}\label{lem3}
	Let $(\Gamma,\tau)$ and $(\Gamma',\tau')$ be $z$-oriented triangulations with all faces of type II. 
	For any faces $F,F'$ from $\Gamma,\Gamma'$ (respectively) and for every special homeomorphism $g:\partial F\to\partial F'$, there exists a $z$-orientation of $\Gamma\#_g\Gamma'$ such that all its faces are of type II. 
\end{lemma}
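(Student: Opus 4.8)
The plan is to recast the condition ``all faces are of type~II'' as a condition on orientations of the edges, perform the gluing at that level, and then translate back. The key observation is that, for a triangulation $\Gamma_0$, a $z$-orientation with all faces of type~II is essentially the same data as an orientation $D$ of the edges of $\Gamma_0$ for which every face is a directed $3$-cycle. One direction is already recorded in Section~2: the type~II edge directions of such a $\tau$ give such a $D=D_\tau$. For the converse I would argue as follows. Along any zigzag $v_1,v_2,v_3,\dots$ of $\Gamma_0$, the face $v_iv_{i+1}v_{i+2}$ is a directed $3$-cycle of $D$, so the $D$-direction of $v_{i+1}v_{i+2}$ is forced by that of $v_iv_{i+1}$; inductively, $D$ either agrees with the forward traversal on every edge of the zigzag or on none. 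Hence exactly one of $Z,Z^{-1}$ ``agrees with $D$'' for each zigzag $Z$, and collecting these yields a $z$-orientation $\tau_D$ in which every edge is traversed by all of its passes in the $D$-direction; thus every edge is of type~II and, by \cite[Proposition~1]{T1}, every face is then of type~II. So $z$-orientations of $\Gamma_0$ with all faces of type~II correspond to orientations making every face a directed $3$-cycle.

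Granting this, the construction is short. Set $D=D_\tau$ and $D'=D_{\tau'}$, so that $\partial F$ is a directed $3$-cycle of $D$ and $\partial F'$ a directed $3$-cycle of $D'$. Choosing the labels $a_1,a_2,a_3$ of the vertices of $F$ in the direction of the $D$-cycle and putting $b_i:=g(a_i)$, the special homeomorphism $g$ identifies the edge $a_ia_{i+1}$ of $\partial F$ with the edge $b_ib_{i+1}$ of $\partial F'$ (indices modulo $3$). Since replacing $\tau'$ by $\tau'^{-1}$ reverses $D'$ and hence swaps the two cyclic orientations of $b_1,b_2,b_3$, I may assume after possibly passing to $\tau'^{-1}$ that the $D$-direction of $a_ia_{i+1}$ coincides with the $D'$-direction of $b_ib_{i+1}$ for every $i$. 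Then I would define an orientation $D''$ of $\Gamma\#_g\Gamma'$ using $D$ on the edges inherited from $\Gamma$ and $D'$ on those inherited from $\Gamma'$; these agree on the three identified edges, so $D''$ is well defined. Every face of $\Gamma\#_g\Gamma'$ is a face of $\Gamma$ other than $F$ or a face of $\Gamma'$ other than $F'$, and it carries the same edges with the same directions it had there, so it is a directed $3$-cycle of $D''$. Applying the reformulation to $\Gamma\#_g\Gamma'$ then produces the desired $z$-orientation with all faces of type~II.

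The step I expect to require the most care is the converse half of the reformulation in the first paragraph — that \emph{every} orientation making all faces directed $3$-cycles is induced by a $z$-orientation — which rests on the coherence of $D$ along zigzags established above; given that, the boundary-matching and the gluing are routine bookkeeping. One should also check the minor point that $\Gamma\#_g\Gamma'$ is again a simple triangulation: this holds because a face of $\Gamma$ sharing an edge with $F$ meets $F$ in that edge only (and similarly on the $\Gamma'$ side), so the ``third vertices'' brought together by the gluing are pairwise distinct and no multiple edges arise.
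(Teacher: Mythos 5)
Your proposal is correct, but it proves the lemma by a genuinely different route than the paper. You first establish a correspondence: a $z$-orientation with all faces of type II is the same data as an orientation of the edges of the triangulation in which every face boundary is a directed $3$-cycle (a directed Eulerian triangulation structure). The forward direction is indeed already in Section 2, and your converse is sound: writing a zigzag as a vertex sequence $v_1,v_2,\dots$, the face containing the consecutive edges $e_i,e_{i+1}$ has vertex set $\{v_i,v_{i+1},v_{i+2}\}$, so the directed-$3$-cycle condition propagates agreement (or disagreement) with the traversal from $e_i$ to $e_{i+1}$, whence exactly one of $Z,Z^{-1}$ agrees with $D$ on every pass; collecting these gives a $z$-orientation in which every edge is of type II with direction $D$, and hence (by \cite[Proposition 1]{T1}) every face is of type II. With this in hand, the gluing is done purely at the level of orientations, after the same normalization step as in the paper (possibly replacing $\tau'$ by $(\tau')^{-1}$ so the two boundary $3$-cycles match under $g$). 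The paper instead stays at the level of zigzags: it isolates the zigzag segments $X_i$, $X'_i$ between successive visits to the glued face, splices them into explicit new zigzags $Z_i,Z'_i$ of $\Gamma\#_g\Gamma'$, and takes $(\tau\setminus\mathcal{Z}_F)\cup(\tau'\setminus\mathcal{Z}_{F'})\cup\{Z_i,Z'_i\}$, checking that no reversed edge ever occurs. Your reduction buys a cleaner argument that avoids this splicing bookkeeping entirely and would apply verbatim to repeated connected sums; the paper's construction buys an explicit description of the new $z$-orientation in terms of the old zigzags. Your closing remarks on simplicity of $\Gamma\#_g\Gamma'$ are a side issue the paper does not address either, and your argument for it is fine.
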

\begin{proof}
	Denote the vertices of $F$ by $v_0,v_1,v_2$. 
	For $\tau$, the edges of $F$ are of type II and form a directed cycle. 
	Suppose that these edges are
	$$v_0v_1, v_1v_2, v_2v_0.$$ 
	We label the vertices of $F'$ by $v'_0,v'_1,v'_2$ such that $v'_i=g(v_i)$ for each $i\in\{0,1,2\}$. 
	For $\tau'$, the edges of $F'$ are also of type II. 
	Without loss of generality, we assume that these edges form the directed cycle 
	$$v'_0v'_1, v'_1v'_2, v'_2v'_0$$ 
	(otherwise, for $(\tau')^{-1}$, all faces of $\Gamma'$ are of type II and the edges of $F'$ are directed as above). 
	Let $\Omega(F)$ and $\Omega(F')$ be the sets of all directed edges of $F$ and $F'$, respectively, i.e.
	$$\Omega(F)=\{v_0v_1, v_1v_2, v_2v_0,v_0v_2,v_2v_1,v_1v_0\}$$
	and
	$$\Omega(F')=\{v'_0v'_1, v'_1v'_2, v'_2v'_0,v'_0v'_2,v'_2v'_1,v'_1v'_0\}.$$
	Let also $\mathcal{Z}_F\subset\tau$ and $\mathcal{Z}_{F'}\subset\tau'$ be the sets of all zigzags containing at least one edge from $\Omega(F)$ and $\Omega(F')$ (respectively). 
	For each $i\in\{0,1,2\}$, there exists a unique $j\in\{0,1,2\}$ and a unique sequence $X_i$ of edges of $\Gamma$ such that (with indices taken modulo $3$)
	$$v_iv_{i+1},X_i,v_{j}v_{j+1}$$
	is a subsequence of a zigzag from $\mathcal{Z}_F$ and $X_i$ does not contain any edge from $\Omega(F)$. 
	Similarly, for each $i\in\{0,1,2\}$, there exists a unique $j\in\{0,1,2\}$ and a unique sequence $X'_i$ of edges of $\Gamma'$ such that
	$$v'_iv'_{i+1},X'_i,v'_{j}v'_{j+1}$$
	is a subsequence of a zigzag from $\mathcal{Z}_{F'}$ and $X'_i$ does not contain any edge from $\Omega(F')$. 
	Recall that $v_i=v'_i$ in $\Gamma\#_g\Gamma'$ for every $i\in\{0,1,2\}$. 
	Then, for every such $i$, there exist zigzags 
	$$Z_i=\dots,v_iv_{i+1},X_i,v_{j}v_{j+1},X'_{j},v_kv_{k+1},\dots$$
	and
	$$Z'_i=\dots,v_iv_{i+1},X'_i,v_{l}v_{l+1},X_{l},v_mv_{m+1},\dots$$
	in $\Gamma\#_g\Gamma'$, for some $j,k,l,m\in\{0,1,2\}$. 
	It is possible that $Z_i=Z_j$ or $Z'_i=Z'_j$ or $Z_i=Z'_j$ for $i,j\in\{0,1,2\}$. 
	Since none of these zigzags contains edges of the form $v_{i+1}v_i$, it follows that for the $z$-orientation 
	$$(\tau\setminus\mathcal{Z}_F)\cup(\tau'\setminus\mathcal{Z}_{F'})\cup\{Z_i,Z'_i:i=0,1,2\}$$
	all edges and all faces of $\Gamma\#_g\Gamma'$ are of the same type as in $\Gamma$ and $\Gamma'$. 
\end{proof}

\begin{lemma}\label{lem4}
	Let $F$ and $F'$ be faces in $\Gamma$ and $\Gamma'$, respectively. 
	For every special homeomorphism $g:\partial F\to\partial F'$, the connected sum $\Gamma\#_g\Gamma'$ is $3$-colorable if and only if both $\Gamma$ and $\Gamma'$ are $3$-colorable. 
\end{lemma}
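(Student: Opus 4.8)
The plan is to argue both directions of the equivalence directly from two structural facts about the connected sum: its vertex set is the disjoint union $V(\Gamma)\sqcup V(\Gamma')$ with the three vertices of $F$ identified with the three vertices of $F'$ via $g$, and every edge of $\Gamma\#_g\Gamma'$ is an edge of $\Gamma$ or an edge of $\Gamma'$ (the three boundary edges of $F$ and $F'$ being glued in matching pairs, since $g$ is special and carries $\partial F$ homeomorphically onto $\partial F'$). Both facts are immediate from the definition of $\Gamma\#_g\Gamma'$, which removes only the interiors of $F$ and $F'$ and then glues their boundaries.

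For the forward implication I would restrict a given proper $3$-coloring. Because only the interior of $F$ is removed, the boundary $\partial F$ stays intact, so every edge of $\Gamma$ persists as an edge of $\Gamma\#_g\Gamma'$; hence a proper $3$-coloring of $\Gamma\#_g\Gamma'$ restricts on $V(\Gamma)$ to a proper $3$-coloring of $\Gamma$, and symmetrically on $V(\Gamma')$ to one of $\Gamma'$.

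For the reverse implication I would take proper $3$-colorings $c$ of $\Gamma$ and $c'$ of $\Gamma'$ and recolor one of them so the two agree on the glued triangle. Label the vertices of $F$ as $v_0,v_1,v_2$ and those of $F'$ as $v'_0,v'_1,v'_2$ with $v'_i=g(v_i)$. Since $F$ is a $3$-cycle in a simple triangulation, $c$ assigns $v_0,v_1,v_2$ three distinct colors, and likewise $c'$ assigns $v'_0,v'_1,v'_2$ three distinct colors, so there is a permutation $\sigma$ of the color set with $\sigma(c(v_i))=c'(v'_i)$ for $i=0,1,2$. Composing a proper coloring with a permutation preserves properness, so $\sigma\circ c$ is again a proper $3$-coloring of $\Gamma$ and now matches $c'$ on the identified vertices; therefore $\sigma\circ c$ on the $\Gamma$-side and $c'$ on the $\Gamma'$-side glue to a well-defined coloring of $\Gamma\#_g\Gamma'$, which is proper because every edge of the connected sum lies in $\Gamma$ or in $\Gamma'$.

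The argument is mostly bookkeeping; the only step that deserves care is the color-alignment on the glued triangle, and that step is painless precisely because $F$ and $F'$ are triangular faces of simple triangulations, which forces their three vertices to receive three different colors in any proper $3$-coloring — so the required permutation $\sigma$ always exists.
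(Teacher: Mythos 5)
Your proposal is correct and follows essentially the same route as the paper: the forward direction by restricting the coloring to the subgraphs $\Gamma$ and $\Gamma'$, and the reverse direction by exploiting that the three vertices of the triangular face receive three distinct colors, so the two colorings can be aligned on the glued triangle (your permutation $\sigma$ is exactly the paper's identification of each color $c_i$ with $c'_i$) and then transferred to the connected sum. No gaps; the extra remark that every edge of $\Gamma\#_g\Gamma'$ lies in $\Gamma$ or $\Gamma'$ just makes explicit what the paper leaves implicit.
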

\begin{proof}
	If $\Gamma\#_g\Gamma'$ is $3$-colorable, then its subgraphs $\Gamma$ and $\Gamma'$ are also $3$-colorable. 
	Now, suppose that $\Gamma$ and $\Gamma'$ are $3$-colored using the colors $c_0,c_1,c_2$ and $c'_0,c'_1,c'_2$, respectively. 
	Assume that $v_0,v_1,v_2$ are the vertices of $F$, with each $v_i$ colored in $c_i$ and each vertex $g(v_i)$ of $F'$ colored in $c'_i$. 
	Then, by identifying each color $c_i$ with $c'_i$ and transferring the colorings from $\Gamma$ and $\Gamma'$ to $\Gamma\#_g\Gamma'$, we obtain a $3$-coloring of this connected sum. 
\end{proof}

Let $n\geq 2$ and let $\Gamma_i$ be a triangulation of a surface $M_i$ for each $i\in\{1,\dots,n\}$. 
We define recursively the connected sum of $\Gamma_1,\dots,\Gamma_n$ (this sum depends on the corresponding special homeomorphisms). 
By Lemma \ref{lem3}, if each $\Gamma_i$ admits a $z$-orientation with all faces of type II, then this sum also admits a $z$-orientation with all faces of type II. 
Similarly, the sum is $3$-colorable if and only if each $\Gamma_i$ is $3$-colorable.

Let now $\mathcal{T}^n_{k,m}$ be the connected sum of $n$ copies of $\mathcal{T}_{k,m}$ with $k,m\geq 3$ (see Example \ref{ex2}) 
and 
let $\mathcal{P}^n$ be the connected sum of $n$ copies of $\mathcal{P}$ (see Example \ref{ex3}). 
Note that $\mathcal{T}^1_{k,m}=\mathcal{T}_{k,m}$ and $\mathcal{P}^1=\mathcal{P}$. 
Thus, $\mathcal{T}^n_{k,m}$ and $\mathcal{P}^n$ are triangulations of the connected sum of $n$ tori and the connected sum of $n$ real projective planes, respectively. 
Since there exist $z$-orientations of $\mathcal{T}_{k,m}$ (for any $k,m$) and $\mathcal{P}$ such that all their faces are of type II (see Examples \ref{ex2} and \ref{ex3}), also $\mathcal{T}^n_{k,m}$ and $\mathcal{P}^n$ admit $z$-orientations with all faces of type II.
Recall that $\mathcal{P}$ is $3$-colorable and $\mathcal{T}_{k,m}$ is $3$-colorable if and only if both $k,m$ are divisible by $3$.  
Then, also $\mathcal{P}^n$ is $3$-colorable and $\mathcal{T}^n_{k,m}$ is $3$-colorable if and only if both $k,m$ are divisible by $3$. 

If for each face $F$ of $\Gamma$ we add a vertex $v_F$ in the interior of $F$ and connect $v_F$ with every vertex of $F$, then we obtain a new triangulation of $M$ which will be denoted by $T(\Gamma)$. 
By \cite[Proposition 4.1]{T1}, if all faces of $(\Gamma,\tau)$ are of type II, then $T(\Gamma)$ admits a unique $z$-orientation $T(\tau)$ such that all faces of $(T(\Gamma),T(\tau))$ are of type I and $\Gamma$ is the subgraph of $T(\Gamma)$ formed by all edges of type II and their vertices.  
Then, for every surface $M$, the existence of $z$-oriented triangulation with all faces of type II implies the existence of a $z$-oriented triangulation of $M$ containing faces of type I.

\section{Final conclusions}
Let us summarize the above examples. 

(1). If $M$ is the sphere, then Theorem \ref{th1} shows that only one of the following possibilities is realized:
\begin{enumerate}
	\item[$\bullet$] A $z$-oriented triangulation of $M$ has all faces are of type II (for instance, $(\mathcal{O},\tau_2)$ from Example \ref{ex1}) and its Markov chain is not ergodic. 
	\item[$\bullet$] A $z$-oriented triangulation of $M$ contains a face of type I (for example, $(\mathcal{O},\tau_1)$ from Example \ref{ex1}) and its Markov chain is ergodic. 
\end{enumerate}

(2). In the case when $M$ is the connected sum of $n$ tori (in particular, $M$ is the torus for $n=1$), Theorem \ref{th1} guarantees that one of the following three possibilities is realized:
\begin{enumerate}
	\item[$\bullet$] A $z$-oriented triangulation of $M$ is $3$-colorable, all its faces are of type II and the Markov chain is not ergodic 
	(for example, $\mathcal{T}^n_{k,m}$ if the numbers $k,m$ are divisible by $3$). 
	\item[$\bullet$] A $z$-oriented triangulation of $M$ is not $3$-colorable, all its faces are of type II and the Markov chain is ergodic 
	(for instance, $\mathcal{T}^n_{k,m}$ if at least one of the numbers $k,m$ is not divisible by $3$).  	
	\item[$\bullet$] A $z$-oriented triangulation of $M$ contains a face of type I and the Markov chain is ergodic. 
	For example, $\mathcal{T}^n_{k,m}$ (for any $k,m$) admits a $z$-orientation $\tau_n$ such that all its faces are of type II (as in the previous cases). 
	Then $(T(\mathcal{T}^n_{k,m}),T(\tau_n))$ is a $z$-oriented triangulation of $M$ with all faces of type I. 
\end{enumerate}
	
(3). If $M$ is a connected sum of $n$ real projective planes (in particular, $M$ is the real projective plane for $n=1$), then, by Theorem \ref{th1}, we obtain the following possibilities: 
\begin{enumerate}
	\item[$\bullet$] A $z$-oriented triangulation of $M$ with all faces are of type II is $3$-colorable and the Markov chain is not ergodic. 
	For instance, $\mathcal{P}^n$ is $3$-colorable and admits a $z$-orientation $\tau_n$ with all faces of type II. 
	\item[$\bullet$] A $z$-oriented triangulation of $M$ contains a face of type I and its Markov chain is ergodic. 
	For example, if we take the $z$-oriented triangulation $(\mathcal{P}^n,\tau_n)$ from the previous case, then $(T(\mathcal{P}^n),T(\tau_n))$ is a $z$-oriented triangulation of $M$ with all faces of type I. 
	\item[$\bullet$] A $z$-oriented triangulation of $M$ with all faces are of type II is not $3$-colorable and the Markov chain is ergodic. 
\end{enumerate}
	The third case is impossible for the real projective plane. 
	We do not know it is realized for the remaining non-orientable surfaces. 



\begin{thebibliography}{99}

\bibitem{BD} %
Brinkmann G., Dress, A. W. M.,
{\it PentHex puzzles. A reliable and efficient top-down approach to fullerene-structure enumeration},
Adv. Appl. Math. 21(1998), 473--480.

\bibitem{Coxeter} Coxeter H.S.M., %
{\it Regular polytopes}, 
Dover Publications, New York 1973 (3rd ed).

\bibitem{CrRos} %
Crapo H., Rosenstiehl P., 
{\it On lacets and their manifolds}, 
Discrete Math. 233 (2001), 299--320.

\bibitem{DDS-book} Deza M., Dutour Sikiri\'c M., Shtogrin M., %
{\it Geometric Structure of Chemistry-relevant Graphs: zigzags and central circuit}, 
Springer 2015. 

\bibitem{DezaPankov} %
Deza M., Pankov M., {\it Zigzag Structure of Thin Chamber Complexes},
Discrete Comput. Geom. 59 (2018), 363--382.

\bibitem{Fisk} %
Fisk S., 
{\it Geometric coloring theory}, Adv. Math., 24 (3) (1977), pp. 298-340.

\bibitem{GR-book} %
Godsil C., Royle G., 
{\it Algebraic Graph Theory}, 
Graduate Texts in Mathematics 207, Springer 2001. 

\bibitem{TriApp} %
Hjelle Ø., Dæhlen M., {\it Triangulations and Applications}, Springer 2006. 

\bibitem{HagMarkovCh}
Häggström O., {\it Finite Markov Chains and Algorithmic Applications}, 
London Mathematical Society Student Texts, Cambridge University Press (2002).

\bibitem{MoharKlein}%
Král’ D., Mohar B., Nakamoto A., Pangrác O., Suzuki Y., 
{\it Coloring Eulerian Triangulations of the Klein Bottle}, 
Graphs Comb. 28, 499–530 (2012).

\bibitem{Lins2} %
Lins S., Oliveira-Lima E., Silva V., 
{\it A homological solution for the Gauss code problem in arbitrary surfaces}, 
J. Combin. Theory, Ser. B 98(2008), 506--515.

\bibitem{MoharProj} %
Mohar B., 
{\it Coloring Eulerian triangulations of the projective plane}, 
Discrete Math. 244(1–3), 339–343 (2002).

\bibitem{PT1} %
Pankov M., Tyc A., {\it Connected sums of z-knotted triangulations}, 
Euro. J. Comb. 80 (2019), 326--338.

\bibitem{PT3} %
Pankov M., Tyc A., {\it On two types of $z$-monodromy in triangulations of surfaces},
Discrete Math. 342 (2019), 2549--2558.

\bibitem{PT2} %
Pankov M., Tyc A., {\it $z$-Knotted Triangulations of Surfaces}, Discrete Comput. Geom. 66 (2021), 636--658. 

\bibitem{Shank} Shank H., %
{\it The theory of left-right paths} in Combinatorial Mathematics III,
Lecture Notes in Mathematics 452, Springer 1975, 42--54.

\bibitem{TsaiWest} %
Tsai M.-T., West D.B., {\it A new proof of 3-colorability of Eulerian triangulations}, 
Ars Math. Contemp. 4 (2011), 73--77.

\bibitem{T4} %
Tyc A., {\it On $z$-monodromies in embedded graphs}, 
Ars Math. Contemp. 24 (2024), \#P4.09, 15 pp.

\bibitem{T3} %
Tyc A., {\it Zigzags in combinatorial tetrahedral chains and the associated Markov chain}, 
Discrete Comput. Geom. 72 (2024), 1579--1595. 

\bibitem{T2} %
Tyc A., {\it  $Z$-knotted and z-homogeneous triangulations of surfaces}, 
Discrete Math. 344 (2021), 112405.

\bibitem{T1} %
Tyc A., {\it $Z$-oriented triangulations of surfaces}, 
Ars Math. Contemp. 22 (2022), \#P1.02, 17 pp.

\end{thebibliography}
\end{document}